\newtheorem{theorem}{Theorem}[section]
\newtheorem{corollary}{Corollary}
\newtheorem{lemma}[theorem]{Lemma}
\newtheorem{proposition}{Proposition}
\theoremstyle{definition}
\newtheorem{definition}[theorem]{Definition}
\newtheorem{remark}{Remark}
\title[Robin boundary condition]
      {Degenerate Convection-Diffusion  Equation with a Robin boundary condition}
\author[Mohamed-Gazibo Karimou]{}
\subjclass{Primary 35F31; Secondary 00A69}
 \keywords{Degenerate parabolic equation, Robin boundary condition, Vanishing viscosity approximation, Entropy solution, Semigroup theory.}
 \email{mgazibok@univ-fcomte.fr}
\thanks{The author is supported by ANR CoToCoLa (Contemporary Topics on Conservation Laws).}
\begin{document}
\maketitle

\centerline{\scshape  Mohamed Gazibo Karimou  }
\medskip
{\footnotesize
 \centerline{Laboratoire de Math\'{e}matiques, CNRS : UMR 6623}
   \centerline{Universit\'{e} de Franche-Comte, 16, route de Gray, 25030 Besan\c con-France}
   } 

\bigskip

 \centerline{(Communicated by the associate editor name)}

\maketitle
\begin{abstract}
We study a Robin boundary problem for degenerate parabolic equation. We suggest a notion of entropy solution and propose a result of existence and uniqueness. Numerical simulations illustrate some aspects of solution behavior.
\end{abstract}

\section{Introduction}
Let $\Omega$ be an open bounded domain  of $\mathbb R^\ell$ with a Lipschitz boundary $\partial\Omega$, and $\eta$ the unit normal to $\partial\Omega$ outward to $\Omega$.
The purpose of this paper is to discuss existence and uniqueness of entropy solution for the following initial boundary value problem
\begin{equation*}
(P)\left\{\begin{array}{lll}
u_t+\mathop{\rm div} f(u)-\Delta\phi(u)&=0\;\;\;\;\;\;\;\;\;\;\;\;\;\;\;\;\;\;\;\mbox{ in } \;\;\; Q=]0,T[\times\Omega,\\
u(0,x)&=u_0(x)\;\;\;\;\;\;\;\;\;\;\;\;\mbox{ in } \;\;\; \Omega,\\
b(u)-(f(u)-\nabla\phi(u)).\eta&=0\;\;\;\;\;\;\;\;\;\;\;\;\;\;\;\;\;\;\mbox{ on } \;\;\; \Sigma=]0,T[\times\partial\Omega.
\end{array}\right.
\end{equation*}
Here, $u_0$ is taking values on $[0, u_{\max}]$ for some $u_{\max}>0$. Further, the function $f$ is a Lipschitz continuous function. Moreover, we require that
\begin{equation}\label{f}\tag{H1}
f(0)=0\;\mbox{ and } \;\;b(0)=0.
\end{equation}
The diffusion term $\phi$ is a continuous function. We consider that there exist a critical value $u_c$ of the unknown $u$ such that:
$\phi(.)$ is zero  on $[0,u_c]$ with $0\leq u_c\leq u_{\max}$ and $\phi(.)$ is strictly increasing else. Then problem $(P)$ degenerates to hyperbolic when $u$ takes values in the region $[0,u_c]$ where $\phi$ is flat.\\ We suppose that the function $b$ is a continuous non-decreasing function on $\Sigma$. In some situation, $b$ may be a maximal monotone graph on $\mathbb R$ (see \cite{BS}). Here, we assume also that $b$ satisfies the following hypotheses: 	
 \begin{equation*}\label{btrace}\tag{H2}
b=\beta\circ\phi\mbox{ where $\beta$ is a non-decreasing  Lipschitz continuous function. }
\end{equation*}
\begin{equation*}\label{bmax}\tag{H3}
b(u_{\max})\geq|f(u_{\max}).\eta|.
\end{equation*}
For more than a few decades, the degenerate parabolic equation in bounded domain was studied by many authors mainly in the case of Dirichlet  boundary conditions (see e.g. \cite{PORETTA}, \cite{CAR}). The zero-flux boundary condition is studied in \cite{BF} for non-degenerate parabolic case, in \cite{BFK1} for fully degenerate hyperbolic equation and recently in \cite{GB} for the parabolic-hyperbolic problem. Remark, that the condition $b(u)-(f(u)-\nabla\phi(u)).\eta=0$ on $\Sigma$ includes in particular Neumann (zero-flux) condition on the boundary. 

We propose an adequate entropy formulation for problem  $(P)$ which incorporates two boundary integrals. In \cite {GB}, existence and uniqueness for the zero flux boundary condition were proved, under the assumption \eqref{bmax} that reads $f(u_{\max})=0$ in the zero-flux case $b\equiv0$. In contrast to the entropy formulation in \cite{GB}, where the passage to the limit in the only boundary integral is straightforward, for our entropy inequality, we need the assumption \eqref{btrace}, which permits to give a sense to the boundary integral with the term  $b(u)$. Indeed, we can deduce that $b(u)$ has a trace on the boundary as a function in Sobolev space $H^1(\Omega)$.

The proof of existence of our entropy solution for any space dimensions $\ell\geq1$ employs a vanishing viscosity approximation. We pass to limit in the interior of the domain $Q$, by using the local compactness result of Panov \cite {PAN}, for this we suppose some relation between $f$ and $\phi$ (see Definition \ref{compacite}). One can refer to  \cite{GB} for more details. We pay particular attention to the boundary term  (here \eqref{btrace} is needed).

For the uniqueness result, we use nonlinear semigroup techniques (see, e.g., \cite{BGP}) and Kruzhkov doubling of variables methods.  The main goal is to compare two solutions of (P), and it turns out that it is simpler to compare a solution of $(P)$ with a regular solution (in the sense that the total flux is continuous up to the boundary) of the stationary problem associated to $(P)$. Then we prove that entropy solution of $(P)$ is an integral solution, and we refer to the uniqueness of integral solutions granted by the general theory of nonlinear semigroup. Unfortunately, we are not able to obtain regular solution to the stationary problem for any space dimensions, but only in one space dimension. Then, we can deduce the uniqueness just now when $\Omega$ is a bounded open interval of $\mathbb R$. Notice that, for the same argument as for the zero-flux boundary condition \cite{GB}, the problem of uniqueness is still open in multiple space dimensions.

The paper is organized as follows. In  the next section, we give our definition of entropy solution and state some remarks useful for the well-possedness. In section 3, we  prove existence result of entropy  solution.  In the section 4, we prove uniqueness in the case of one space dimension. The latter part is devoted to the numerical investigation of problem $(P)$. We adapt the approach of finite volumes in the spirit of Vovelle (\cite{VOV}) to illustrate  and interpret some observations in the case where the assumptions \eqref{btrace} and \eqref{bmax} are absent. Thereby, we justify the importance these assumptions in this paper.

\section{Notion of entropy solution} Consider the following notion.
\begin{definition}\label{entrsol}
A measurable function $u$ taking values on $[0,u_{\max}]$ is called entropy solution of problem $(P)$ if $\phi(u)\in L^2(0,T;H^1(\Omega))$, $b(u)\in  L^2(0,T;H^1(\Omega))$ and the following conditions hold: \\$\forall k\in  [0,u_{\max}]$,  $\forall\xi\in \mathcal{C}_0^\infty([0,T[\times\mathbb R^\ell)$, with $\xi\geq 0$:
\begin{align}\label{ESP}
&\displaystyle\int_0^T\int_\Omega\left\{|u-k|\xi_t+sign(u-k)\biggl(f(u)-f(k)-\nabla\phi(u)\biggr).\nabla\xi\right\}dxdt\nonumber\\&+\displaystyle\int_\Omega |u_0-k|\xi(0,x)dx+\displaystyle\int_0^T\int_{\partial\Omega} \left|f(k).\eta(x)-b(k)\right|\xi(t,x) d{\mathcal{H}}^{\ell-1}dt
\nonumber\\&-\displaystyle\int_0^T\int_{\partial\Omega}|b(u)-b(k)|\xi(t,x) d{\mathcal{H}}^{\ell-1}dt\geq 0.
\end{align}
Here $\mathcal{H}$ represents the $(\ell-1)-$ dimensional Hausdorff measure on $\partial\Omega$.
\end{definition}
\begin{remark}\label{entweak}
\begin{enumerate}
\item The entropy solution in the sense of Definition \ref{entrsol}  is in particular a weak solution. Indeed, first take in inequality \eqref{ESP}, $k=0$ and use \eqref{f}. Next, take $k=u_{\max}$ and use \eqref{bmax}.
\item Let us stress that, in particular, the  boundary condition $(f(u)-\nabla\phi(u)).\eta=b(u)$ is verified literally in the weak sense as in the case of zero flux boundary condition (see \cite{GB}). This contrasts with the properties of the Dirichlet problem (see \cite{BNL}); we expect that the  boundary condition should be relaxed if assumption \eqref{bmax} is dropped (see \cite{BS, BSHIBI} and also numerical tests of section 5).
\item The integral in the boundary term is well defined due to the hypothesis \eqref{btrace}. We can use the fact that the trace of $b(u)(t,.)\in H^1(\Omega)$ on $\partial\Omega$ is well defined in $L^2(\partial\Omega)$ for $t\in (0,T)$ a.e.
\end{enumerate}
\end{remark}
According to the idea of J. Carrillo (cf \cite{CAR}), we give an additional property of entropy solutions, useful for the  uniqueness techniques.
\begin{proposition}\label{carrillo}
 Let $\xi\in \mathcal{C}^\infty([0,T[\times\mathbb R^\ell)$; then for all $k\in[\phi_{c},u_{\max}]$; for all $D\in\mathbb R^\ell$ and for all entropy solution $u$ of $(P)$, we have:
\begin{align}\label{procar}
&\displaystyle\int_0^T\int_\Omega\left\{|u-k|\xi_t+sign(u-k)(f(u)-f(k)-\nabla\phi(u)+D).\nabla\xi\right\}dxdt\nonumber\\&+\displaystyle\int_\Omega|u_0-k|\xi(0,x)dx-\displaystyle\int_0^T\int_{\partial\Omega} \left|b(u)-b(k)\right|\xi(t,x) d{\mathcal{H}}^{\ell-1}dt\nonumber\\&+\displaystyle\int_0^T\int_{\partial\Omega} \left|b(k)-(f(k)-D).\eta(x)\right|\xi(t,x) d{\mathcal{H}}^{\ell-1}dt\nonumber\\&\geq\mathop{\overline{\lim}}\limits_{\sigma\rightarrow0}\frac{1}{\sigma}\displaystyle\int_0^T\int_{Q\cap\left\{-\sigma<\phi(u)-\phi(k)<\sigma\right\}}\nabla\phi(u).(\nabla\phi(u)-D)\xi(t,x)dxdt.
\end{align}
\end{proposition}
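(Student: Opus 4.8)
The plan is to begin with the basic entropy inequality \eqref{ESP} and refine the way the diffusion term is handled, following Carrillo's technique for capturing the behavior of $\nabla\phi(u)$ on the level set $\{\phi(u)=\phi(k)\}$. The key observation is that the term $\mathop{\rm sign}(u-k)\nabla\phi(u)$ appearing in \eqref{ESP} should be replaced by a smoother approximation, so that one can extract the precise dissipation contribution near the degenerate set. Specifically, I would introduce a Lipschitz approximation of the sign function, $\mathop{\rm sign}_\sigma(s) = \min\{\max\{s/\sigma,-1\},1\}$, and apply it to the argument $\phi(u)-\phi(k)$ rather than to $u-k$. Since $\phi$ is non-decreasing, on $[\phi_c, u_{\max}]$ the level value $\phi(k)$ distinguishes the region where $\phi$ is strictly increasing, so that $\mathop{\rm sign}(\phi(u)-\phi(k)) = \mathop{\rm sign}(u-k)$ wherever $\phi(u)\neq\phi(k)$, and I can interchange the two sign functions away from the degenerate set at no cost.

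The main calculation is then an integration by parts on the diffusion term using the test function $\mathop{\rm sign}_\sigma(\phi(u)-\phi(k))\,\xi$. First I would recall from the weak formulation (Remark \ref{entweak}, item 1) that $u$ is a weak solution, so that $\phi(u)\in L^2(0,T;H^1(\Omega))$ furnishes a well-defined gradient. Applying the chain rule, the gradient of $\mathop{\rm sign}_\sigma(\phi(u)-\phi(k))$ equals $\tfrac{1}{\sigma}\mathbf 1_{\{-\sigma<\phi(u)-\phi(k)<\sigma\}}\nabla\phi(u)$, which is exactly the indicator appearing in the right-hand side of \eqref{procar}. Pairing this gradient against $(\nabla\phi(u)-D)\xi$ produces the quantity under the $\overline{\lim}$, and the passage to the limit $\sigma\to 0$ converts the entropy inequality with the regularized sign into the desired lower bound. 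The arbitrary constant vector $D\in\mathbb R^\ell$ enters because $\mathop{\rm div} D=0$, so adding $\mathop{\rm sign}(u-k)D\cdot\nabla\xi$ to the convection term on the left and $-D$ inside the flux on the boundary term is consistent with the divergence structure; this is what accounts for the shift $f(k)-D$ in the boundary integral and the extra $D$ in the interior flux.

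For the boundary terms, I would combine the incoming boundary integral $\int|f(k).\eta - b(k)|\xi$ from \eqref{ESP} with the contribution generated by the regularized diffusion test function on $\partial\Omega$. Here the hypothesis \eqref{btrace}, namely $b=\beta\circ\phi$ with $\beta$ Lipschitz, is essential: it guarantees that $b(u)$ has an $L^2$ trace on $\partial\Omega$ (Remark \ref{entweak}, item 3) and that $\mathop{\rm sign}_\sigma(\phi(u)-\phi(k))$ converges to $\mathop{\rm sign}(b(u)-b(k))$ in the correct sense on the boundary as $\sigma\to 0$, so that the boundary dissipation term $\int|b(u)-b(k)|\xi$ is preserved under the limit while the flux boundary term reorganizes into $|b(k)-(f(k)-D).\eta|$. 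Note that dropping the restriction to nonnegative $\xi$ is permissible here because the Carrillo inequality is meant to hold for signed test functions: one tests \eqref{ESP} separately with the positive and negative parts in the limiting argument, the remaining terms being the weak-formulation identities that hold with equality.

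The hard part will be the rigorous passage to the limit $\sigma\to 0$ in the boundary integrals while keeping track of the sign conventions, since the degenerate set $\{\phi(u)=\phi(k)\}$ can meet the boundary in a nontrivial way. Controlling the interplay between the interior indicator $\mathbf 1_{\{-\sigma<\phi(u)-\phi(k)<\sigma\}}$ and the trace of $b(u)$ on $\partial\Omega$ — and verifying that the only surviving boundary contributions are the two displayed integrals involving $|b(u)-b(k)|$ and $|b(k)-(f(k)-D).\eta|$ — is where hypothesis \eqref{btrace} must be used most carefully. The interior $\overline{\lim}$ on the right is then retained rather than evaluated, precisely because no absolute control on the sign of $\nabla\phi(u).(\nabla\phi(u)-D)$ is available; its appearance as a lower bound is the essential output of Carrillo's method, supplying the extra dissipation information needed later for the doubling-of-variables uniqueness argument.
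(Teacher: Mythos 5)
The paper itself offers no written proof of Proposition \ref{carrillo} — it is stated ``according to the idea of J. Carrillo'' with the detailed analogue carried out in \cite{GB} — and your skeleton is exactly that intended route: use the fact that $u$ is a weak solution, test with $\mathrm{sign}_\sigma(\phi(u)-\phi(k))\,\xi$, apply the $H^1$ chain rule to produce $\frac1\sigma\mathbf{1}_{\{-\sigma<\phi(u)-\phi(k)<\sigma\}}\nabla\phi(u)$, exploit that the constant field $D$ (and $f(k)$) is divergence-free to generate the shifted flux and boundary terms, and use \eqref{btrace} so that the monotonicity $b=\beta\circ\phi$ turns the boundary limit into $|b(u)-b(k)|$ (note your phrasing is slightly off: $\mathrm{sign}_\sigma(\phi(u)-\phi(k))$ does not converge to $\mathrm{sign}(b(u)-b(k))$ pointwise, but the product with $b(u)-b(k)$ still converges to $|b(u)-b(k)|$, which is all that is needed).

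Two genuine gaps remain. First, your argument for dropping the sign restriction on $\xi$ is wrong: the positive and negative parts of a smooth function are not admissible smooth test functions, and, more importantly, several steps of the derivation are one-sided — in particular the boundary estimate $\int_\Sigma \mathrm{sign}_\sigma(\phi(u)-\phi(k))\bigl(b(k)-(f(k)-D).\eta\bigr)\xi\leq\int_\Sigma\bigl|b(k)-(f(k)-D).\eta\bigr|\xi$ holds only when $\xi\geq0$, and with $\xi$ of variable sign inequality \eqref{procar} simply fails in general. The proposition must be read as inheriting $\xi\geq0$ from Definition \ref{entrsol}. Second, you never produce the terms $|u-k|\xi_t$ and $|u_0-k|\xi(0,x)$: the function $\mathrm{sign}_\sigma(\phi(u)-\phi(k))\xi$ is not directly admissible in \eqref{WSP} because its time derivative is not controlled, so one must mollify in time (Steklov averaging, in the style of Alt--Luckhaus/Carrillo) and work with the primitive $I_\sigma(z)=\int_k^z \mathrm{sign}_\sigma(\phi(r)-\phi(k))\,dr$, which converges to $|z-k|$ precisely because $k$ lies in $[u_c,u_{\max}]$ where $\phi$ is strictly increasing — this is where the restriction on $k$ does its real work, and your sketch invokes it only loosely. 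Relatedly, the convective remainder $\frac1\sigma\int\!\!\int_{\{|\phi(u)-\phi(k)|<\sigma\}}(f(u)-f(k)).\nabla\phi(u)\,\xi\,dx\,dt$ must be shown to vanish as $\sigma\to0$ (Cauchy--Schwarz against the dissipation term, together with the fact that $|u-k|$ is small on the level set $\{|\phi(u)-\phi(k)|<\sigma\}$ for $k>u_c$); without this step the $\overline{\lim}$ on the right-hand side of \eqref{procar} is not isolated from the convection terms, so the proof as sketched is incomplete even in the interior.
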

In general, uniqueness for evolution equation of kind $(P)$ appear very difficult mainly for the initial boundary values problems. In this context, the use of nonlinear semigroup techniques offers many advantages. Let us present briefly another notion of solution coming from the theory of nonlinear semigroups (see, e.g., \cite{BGP}).
\begin{definition}
 Let $A$ be an m-accretive operator (see, e.g., \cite{BGP}). Suppose that $h\in L^1(Q)$,  $u_0\in L^1(\Omega)$. A measurable function $v\in \mathcal{C}([0,T]; L^1(\Omega;[0,u_{\max}]))$\footnote{Here, we will write  $L^1(\Omega;[0,u_{\max}])$ for the set of all measurable functions from $\Omega$ to $[0,u_{\max}]$.} is an integral solution of the abstract evolution problem
\begin{align}\label{solintegrale}
v_t + A(v)\ni h(t),\;\;\;\; v(t = 0) = u_0,
\end{align}
if $v(0,.) = u_0(.)$ and for all $(u,z) \in A$
$$\frac{d}{dt}||v(t)-u||_{L^1(\Omega)}\leq\int_\Omega sign_0(v(t)-u)(h(t)-z)+\int_{\{v=u\}}|h(t)-z| \mbox{ in } \mathcal{D}'(0,T).$$
\end{definition}
We will see that entropy and integral solution coincide in the case $\Omega=(a,b)$ an interval of $\mathbb R.$
\section{Existence of entropy solution}
The main result of this part is the following:
\begin{theorem}\label{existence1}
Let $\ell\geq 1$. Assume that \eqref{f}, \eqref{btrace} and \eqref{bmax} holds. Suppose that $(f,\phi)$ is non-degenerate (in the sense of Definition \ref{compacite} below). Then, there exists an entropy solution $u$ for the problem $(P)$.
\end{theorem}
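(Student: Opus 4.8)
The plan is to construct the solution by a vanishing viscosity approximation, as announced in the introduction and as done in \cite{GB}. First I would regularize the degenerate diffusion: for $\ep>0$ set $\phi_\ep(s)=\phi(s)+\ep s$, so that $\phi_\ep$ is strictly increasing, and consider the approximate problem
\begin{equation*}
u^\ep_t+\mathop{\rm div}f(u^\ep)-\Delta\phi_\ep(u^\ep)=0 \ \text{ in } Q, \qquad b(u^\ep)-(f(u^\ep)-\nabla\phi_\ep(u^\ep)).\eta=0 \ \text{ on } \Sigma,
\end{equation*}
with smooth initial data $u_0^\ep\to u_0$ in $L^1(\Omega)$ and $0\le u_0^\ep\le u_{\max}$. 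This is now a non-degenerate parabolic equation with a nonlinear Robin boundary condition, for which existence (and uniqueness) of a weak solution $u^\ep$ follows from the non-degenerate theory invoked in \cite{BF}. Hypotheses \eqref{f} and \eqref{bmax} guarantee, via the maximum principle, that $0\le u^\ep\le u_{\max}$, since $u\equiv0$ and $u\equiv u_{\max}$ are respectively a sub- and a super-solution compatible with the boundary condition.

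The second step is to derive the $\ep$-uniform a priori estimates. Testing the equation with $\phi_\ep(u^\ep)$ and integrating by parts, the boundary contribution is rewritten using the Robin condition and controlled by means of \eqref{bmax}; this yields a bound on $\nabla\phi_\ep(u^\ep)$, hence on $\nabla\phi(u^\ep)$ and on $\sqrt{\ep}\,\nabla u^\ep$, in $L^2(Q)$ uniformly in $\ep$. The role of \eqref{btrace} appears precisely here: since $b=\beta\circ\phi$ with $\beta$ Lipschitz, we have $|\nabla b(u^\ep)|\le L_\beta\,|\nabla\phi(u^\ep)|$, so $\{b(u^\ep)\}$ is also bounded in $L^2(0,T;H^1(\Omega))$. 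Together with the uniform $L^\infty$ bound, these estimates will yield, by weak lower semicontinuity of the norms, the regularity $\phi(u)\in L^2(0,T;H^1(\Omega))$ and $b(u)\in L^2(0,T;H^1(\Omega))$ demanded in Definition \ref{entrsol} for the limit function $u$.

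The third step is compactness. The $L^\infty$ bound gives a weak-$*$ limit $u$, and the $H^1$ bounds give weak limits of $\phi(u^\ep)$ and $b(u^\ep)$; but to pass to the limit in the nonlinear fluxes one needs strong convergence of $u^\ep$. Here the non-degeneracy of $(f,\phi)$ (Definition \ref{compacite}) lets me apply Panov's local compactness result \cite{PAN} to extract $u^\ep\to u$ in $L^1_{loc}(Q)$ and a.e., so that $f(u^\ep)\to f(u)$, $\phi(u^\ep)\to\phi(u)$ and $b(u^\ep)\to b(u)$ a.e. and strongly in $L^2(Q)$; combined with the uniform $H^1$ bounds this also gives $\nabla\phi(u^\ep)\rightharpoonup\nabla\phi(u)$ and $\nabla b(u^\ep)\rightharpoonup\nabla b(u)$ weakly in $L^2(Q)$.

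Finally, I would write the Kruzhkov entropy inequality for the smooth problem $(P_\ep)$ (obtained by multiplying by $sign_\delta(u^\ep-k)\,\xi$ and letting $\delta\to0$) and pass to the limit $\ep\to0$. The interior terms converge by the strong/weak convergences above, with $\ep\nabla u^\ep\to0$ in $L^2(Q)$, and the initial term converges since $u_0^\ep\to u_0$. The hard part will be the boundary integral, exactly as stressed in the introduction: the trace of $u^\ep$ is not controlled, so \eqref{btrace} is essential, because $b(u^\ep)=\beta(\phi(u^\ep))\in H^1(\Omega)$ does possess a trace, and the uniform $H^1$ bound together with the a.e. convergence and the compact trace embedding $H^1(\Omega)\hookrightarrow L^2(\partial\Omega)$ should upgrade this to strong convergence of the traces in $L^2(\Sigma)$. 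Using the Robin condition to rewrite the viscous boundary flux then produces the two boundary terms $\int_\Sigma|f(k).\eta-b(k)|\xi$ and $-\int_\Sigma|b(u)-b(k)|\xi$ of \eqref{ESP}, completing the passage to the limit and exhibiting an entropy solution in the sense of Definition \ref{entrsol}.
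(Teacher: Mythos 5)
Your proposal is correct and follows essentially the same route as the paper's (sketched) proof: vanishing viscosity approximation $(P_\epsilon)$ with the uniform estimates of Theorem~\ref{existence2}, Panov's compactness result (Theorem~\ref{panov}) under the non-degeneracy assumption for the interior passage to the limit, and a trace-convergence argument for the boundary integral which is precisely the content of the paper's Lemma~\ref{boundary}. The only cosmetic deviation is that the paper also regularizes the boundary nonlinearity, setting $b_\epsilon=\beta\circ\phi_\epsilon$ so that the approximate problem retains the structure \eqref{btrace}, whereas you keep $b=\beta\circ\phi$ unchanged; this does not affect the argument.
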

To show the existence of entropy solutions,  we approximate $\phi(u)$ by $\phi_\epsilon(u^\epsilon)=\phi(u^\epsilon)+\epsilon Id(u^\epsilon)$ for each $\epsilon>0$ and set $b_\epsilon(u^\epsilon)=\beta\circ\phi_\epsilon(u_\epsilon)$.
 We obtain the following regularized strictly parabolic problem $(P_\epsilon)$ with unknown $u^\epsilon$
\begin{equation*}
(P_\epsilon)\left\{\begin{array}{lll}
u_t^\epsilon+\mathop{\rm div}  f(u^\epsilon)-\Delta\phi_\epsilon(u^\epsilon)&=0\;\;\;\;\;\;\;\;\;\;\;\;\;\;\;\;\;\;\;\mbox{ in } \;\;\; Q=]0,T[\times\Omega,\\
u^\epsilon(0,x)&=u_0^\epsilon(x)\;\;\;\;\;\;\;\;\;\;\;\mbox{ in } \;\;\; \Omega,\\
b_\epsilon(u^\epsilon)-(f(u^\epsilon)-\nabla\phi_\epsilon(u^\epsilon)).\eta&=0\;\;\;\;\;\;\;\;\;\;\;\;\;\;\;\;\;\;\;\mbox{ on } \;\;\; \Sigma=]0,T[\times\partial\Omega,
\end{array}\right.
\end{equation*}
where $(u_0^\epsilon)_{\epsilon}$ is a sequence of smooth functions that converges to $u_0$ a.e and respects the minimum/maximum values of $u_0$.
\begin{definition}\label{weaksolappro}
Let $u_0$ be a measurable $[0,u_{\max}]$-valued function. A measurable function $u^\epsilon\in L^2(0,T;H^1(\Omega))$ taking values on $[0,u_{\max}]$ is called  weak solution of problem $(P_\epsilon)$ if :  $\forall \theta\in L^2(0,T;H^1(\Omega))\cap L^\infty(Q)$ such that $\theta_t\in L^2(Q)$ and $\theta(T,.)=0$, one has
\begin{align}\label{WSP}
&\displaystyle\int_0^T\int_\Omega\left\{u^\epsilon\theta_t+(f(u^\epsilon)-\nabla\phi_\epsilon(u^\epsilon)).\nabla\theta\right\}dxdt+\int_\Omega u_0^\epsilon\theta(0,x)dx\nonumber\\&
-\int_0^T\int_{\partial\Omega}b_\epsilon(u^\epsilon)\theta d{\mathcal{H}}^{\ell-1}dt=0.
\end{align}
\end{definition}
\begin{theorem}\label{existence2}
For $u_0\in [0,u_{\max}]$, assume \eqref{f}, \eqref{btrace} and \eqref{bmax} hold. Problem $(P_\epsilon)$ admits a weak solution $u^\epsilon$ which is also an entropy solution. In particular, we have $0\!\leq u^\epsilon\!\leq u_{\max}$. In addition, there exists  C independent on  $\epsilon$ such that
\begin{align}\label{estimateur1}
||\sqrt{\epsilon}\nabla u^\epsilon||_{L^2(Q)}\leq C;
\end{align}
\begin{align}\label{estimateur2}
||\phi_\epsilon(u_\epsilon)||_{L^2(0,T; H^1(\Omega))}\leq C;
\end{align}
\begin{align}\label{estimateur3}
||b_\epsilon^n(u_\epsilon)||_{L^1(\Sigma)}\leq C \mbox{ and } \displaystyle\int_\Sigma u_\epsilon b_\epsilon^n(u_\epsilon)\leq C.
\end{align}
\end{theorem}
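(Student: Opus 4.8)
The plan is to exploit that $(P_\epsilon)$ is genuinely (uniformly) parabolic since $\phi_\epsilon'\ge\epsilon>0$, and to proceed in four stages: existence of a weak solution, the maximum principle $0\le u^\epsilon\le u_{\max}$, the uniform estimates, and finally the verification that the weak solution is an entropy solution. For existence I would recast the spatial part of $(P_\epsilon)$ as the abstract evolution equation $u^\epsilon_t+A_\epsilon u^\epsilon=0$, where $A_\epsilon$ is the operator read off the weak formulation \eqref{WSP}, namely $\langle A_\epsilon v,\theta\rangle=\int_\Omega(\nabla\phi_\epsilon(v)-f(v))\cdot\nabla\theta+\int_{\partial\Omega}b_\epsilon(v)\theta\, d\mathcal{H}^{\ell-1}$ for $v,\theta\in H^1(\Omega)$. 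The decisive structural point is that $\phi_\epsilon'\ge\epsilon>0$ makes the principal part coercive on $H^1(\Omega)$, that $b_\epsilon=\beta\circ\phi_\epsilon$ is monotone on the boundary by \eqref{btrace}, and that $f$ is a Lipschitz, hence lower-order and compact, perturbation. A standard Faedo--Galerkin scheme (or, equivalently, implicit time discretization in the spirit of nonlinear semigroup theory) then produces $u^\epsilon$, the a priori bounds below being derived at the approximation level and passed to the limit by monotonicity and compactness. This is the stage where strict parabolicity $\epsilon>0$ is indispensable.

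For the maximum principle I would show that the constants $0$ and $u_{\max}$ are respectively a sub- and a supersolution. For the lower bound, \eqref{f} gives $f(0)=0$ and $b_\epsilon(0)=\beta(\phi_\epsilon(0))=\beta(0)=0$, so the constant $0$ solves $(P_\epsilon)$ exactly; testing \eqref{WSP} with $(u^\epsilon)^-$ then yields $u^\epsilon\ge0$. For the upper bound, $\beta$ nondecreasing gives $b_\epsilon(u_{\max})=\beta(\phi(u_{\max})+\epsilon u_{\max})\ge\beta(\phi(u_{\max}))=b(u_{\max})$, and \eqref{bmax} gives $b(u_{\max})\ge f(u_{\max}).\eta$; hence the boundary defect of the constant $u_{\max}$ has the right sign, and testing with $(u^\epsilon-u_{\max})^+$ yields $u^\epsilon\le u_{\max}$. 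This confirms $0\le u^\epsilon\le u_{\max}$.

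The uniform estimates are energy identities read off \eqref{WSP}. Choosing $\theta\equiv1$ (conservation of mass) gives $\frac{d}{dt}\int_\Omega u^\epsilon+\int_{\partial\Omega}b_\epsilon(u^\epsilon)=0$; since $b_\epsilon(u^\epsilon)=\beta(\phi_\epsilon(u^\epsilon))\ge0$ for $u^\epsilon\ge0$, integrating in time and using $0\le u^\epsilon\le u_{\max}$ yields the $L^1(\Sigma)$ bound in \eqref{estimateur3} with $C=\int_\Omega u_0$. Testing with $\theta=\phi_\epsilon(u^\epsilon)$ and integrating by parts, the diffusion produces $\int_Q|\nabla\phi_\epsilon(u^\epsilon)|^2$, while the boundary condition turns the normal-flux trace into $-\int_\Sigma(f(u^\epsilon).\eta-b_\epsilon(u^\epsilon))\phi_\epsilon(u^\epsilon)$; the favourable sign $b_\epsilon(u^\epsilon)\phi_\epsilon(u^\epsilon)=\beta(\phi_\epsilon(u^\epsilon))\phi_\epsilon(u^\epsilon)\ge0$, together with the uniform $L^\infty$ bound on $u^\epsilon$ (which controls $f$ and $\phi_\epsilon$, the convection boundary contribution being absorbed through a primitive of $uf'(u)$), gives \eqref{estimateur2} and the bound $\int_\Sigma u^\epsilon b_\epsilon(u^\epsilon)\le C$. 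Finally, using $\nabla\phi_\epsilon(u^\epsilon)\cdot\nabla u^\epsilon=(\phi'(u^\epsilon)+\epsilon)|\nabla u^\epsilon|^2\ge\epsilon|\nabla u^\epsilon|^2$ in the analogous identity obtained with $\theta=u^\epsilon$ gives \eqref{estimateur1}.

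Since for fixed $\epsilon$ estimate \eqref{estimateur1} already places $u^\epsilon\in L^2(0,T;H^1(\Omega))$, the weak solution is regular enough to carry the Kruzhkov entropy analysis. I would test \eqref{WSP} with $\theta=S_\delta'(u^\epsilon-k)\xi$, where $S_\delta$ is a smooth convex approximation of $|\cdot|$ with $S_\delta'\to\operatorname{sign}$; the diffusion term then contributes $-\int_Q S_\delta''(u^\epsilon-k)\phi_\epsilon'(u^\epsilon)|\nabla u^\epsilon|^2\xi\le0$, which is discarded to produce the inequality, and on letting $\delta\to0$ the boundary integral $-\int_\Sigma b_\epsilon(u^\epsilon)S_\delta'(u^\epsilon-k)\xi$ combines with the flux trace to yield exactly the two boundary terms of \eqref{ESP}. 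I expect this boundary analysis to be the main obstacle: matching the normal-flux trace against the level $k$ so as to produce $\int_\Sigma|f(k).\eta-b(k)|\xi$ and $-\int_\Sigma|b(u)-b(k)|\xi$ with the correct signs requires \eqref{btrace} to give a meaning to the trace of $b(u^\epsilon)$, whereas the interior terms are entirely standard.
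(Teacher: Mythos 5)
Your proposal is correct and follows essentially the route the paper itself indicates: the paper gives no detailed argument for Theorem \ref{existence2}, stating only that it ``can be proved, e.g., using Galerkin method (see \cite{GB})'', and your Faedo--Galerkin construction, the comparison with the constant sub-/supersolutions $0$ and $u_{\max}$ via \eqref{f}, \eqref{btrace}, \eqref{bmax}, the energy estimates from testing with $u^\epsilon$ and $\phi_\epsilon(u^\epsilon)$ (performed at the approximation level), and the Kruzhkov-type entropy derivation with $S_\delta'(u^\epsilon-k)\xi$, splitting the boundary flux as $sign(u^\epsilon-k)(b_\epsilon(u^\epsilon)-b_\epsilon(k))+sign(u^\epsilon-k)(b_\epsilon(k)-f(k).\eta)$, are precisely the standard ingredients of that reference. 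The only cosmetic point is that the superscript in $b_\epsilon^n$ in \eqref{estimateur3} is a leftover approximation index, which you correctly read as $b_\epsilon$.
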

This result can be proved, e.g., using Galerkin method (see e.g. \cite{GB}).
\begin{lemma}\label{boundary}
Assume that the sequence $(\Psi_j)_j$ is such that: $||\Psi_j||_{L^2(0,T;H^1(\Omega))}\leq C$ and $\Psi_j\longrightarrow\Psi$ in $L^2(Q)$. Then $\gamma\Psi_j\longrightarrow\gamma\Psi$ in $L^2(\Sigma)$, where $\gamma$ is the trace operator.
\end{lemma}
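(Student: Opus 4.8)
The plan is to exploit the one gain we have beyond mere strong $L^2(Q)$-convergence, namely the uniform bound in $L^2(0,T;H^1(\Omega))$, by means of a multiplicative trace inequality. The key analytic input is that on a bounded Lipschitz domain there is a constant $C$ (depending only on $\Omega$) such that, for every $v\in H^1(\Omega)$,
$$\|\gamma v\|_{L^2(\partial\Omega)}^2\leq C\,\|v\|_{L^2(\Omega)}\,\|v\|_{H^1(\Omega)}.$$
This is obtained in the usual way by picking a Lipschitz vector field $\vec F$ with $\vec F\cdot\eta\geq c>0$ on $\partial\Omega$, applying the divergence theorem to $v^2\vec F$, and bounding the resulting volume integral by Cauchy--Schwarz. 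Applying it to a difference $v=\Psi_j-\Psi_i$ at a fixed time $t$, then integrating over $(0,T)$ and using the Cauchy--Schwarz inequality in time, yields
$$\|\gamma\Psi_j-\gamma\Psi_i\|_{L^2(\Sigma)}^2 \leq C\,\|\Psi_j-\Psi_i\|_{L^2(0,T;H^1(\Omega))}\,\|\Psi_j-\Psi_i\|_{L^2(Q)}.$$

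With this estimate in hand I would conclude in two steps. First, since $\|\Psi_j-\Psi_i\|_{L^2(0,T;H^1(\Omega))}\leq 2C$ by hypothesis, while $\|\Psi_j-\Psi_i\|_{L^2(Q)}\to0$ (the sequence being Cauchy in $L^2(Q)$, as it converges there), the displayed bound shows that $(\gamma\Psi_j)_j$ is Cauchy in $L^2(\Sigma)$ and hence converges strongly to some limit $g\in L^2(\Sigma)$. Second, I would identify $g$ with $\gamma\Psi$. The uniform bound makes $(\Psi_j)_j$ relatively weakly compact in the reflexive space $L^2(0,T;H^1(\Omega))$; any weak limit is in particular a weak $L^2(Q)$-limit, hence coincides with the strong $L^2(Q)$-limit $\Psi$, so $\Psi\in L^2(0,T;H^1(\Omega))$ and the whole sequence converges weakly to $\Psi$ there. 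Since the trace operator $\gamma\colon L^2(0,T;H^1(\Omega))\to L^2(\Sigma)$ is linear and bounded, it is weakly continuous, whence $\gamma\Psi_j\rightharpoonup\gamma\Psi$ weakly in $L^2(\Sigma)$; comparing with the strong limit forces $g=\gamma\Psi$, which is the assertion.

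The only genuinely delicate point is the multiplicative trace inequality on a merely Lipschitz boundary: one must guarantee the existence of a transversal vector field $\vec F$ and the validity of the divergence formula for $v^2\vec F$ with $v\in H^1(\Omega)$. Both are standard for Lipschitz domains, via a partition of unity and local bi-Lipschitz flattening of $\partial\Omega$, so I expect no real obstruction there. I would emphasize that the more familiar compactness route, based on the compact embedding of $H^1(\Omega)$ into $L^2(\partial\Omega)$, is \emph{not} available here: we have no control on the time derivatives $\partial_t\Psi_j$, so no Aubin--Lions type argument applies. The strength of the multiplicative inequality is precisely that it transfers the strong interior convergence directly to the boundary, using only the $H^1$-bound and requiring no compactness in the time variable.
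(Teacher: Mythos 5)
Your proof is correct and is essentially the argument the paper has in mind: the paper only hints that the proof ``uses localization to a small neighborhood of $\Sigma$'', and your multiplicative trace inequality $\|\gamma v\|_{L^2(\partial\Omega)}^2\leq C\,\|v\|_{L^2(\Omega)}\|v\|_{H^1(\Omega)}$, proved via the divergence theorem with a vector field transversal to $\partial\Omega$, is precisely the optimized quantitative form of that localization estimate (it trades boundary control for interior $L^2$-smallness times the uniform $H^1$-bound, exactly as localizing to a $\delta$-neighborhood does). Your completion of the argument --- the Cauchy property of $(\gamma\Psi_j)_j$ in $L^2(\Sigma)$, followed by the weak-compactness identification showing $\Psi\in L^2(0,T;H^1(\Omega))$ and $g=\gamma\Psi$ --- is a sound and complete way to finish, so there is nothing to correct.
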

The proof uses localization to a small neighborhood of $\Sigma$.\\
To prove existence of entropy solution,  we assume that the couple $(f(.),\phi(.))$ is non-degenerate in the sense of the following definition:
\begin{definition}\label{compacite}(\normalfont{{Panov} \cite{PAN}}).
Let $\phi$ be zero on $[0,u_c]$, strictly increasing on $[u_c,u_{\max}]$ and a vector $f=(f_1,...,f_\ell)$. A couple $(f(.),\phi(.))$ is said to be non-degenerate if, for all $\xi\in\mathbb R^{\ell}\backslash \{0\}$, the functions $\lambda\longmapsto\displaystyle\sum\nolimits_{i=1}^{\ell}\xi_if_i(\lambda)$ are not affine on the non-degenerate sub intervals of $[0, u_c]$.
\end{definition}
\begin{theorem}\label{panov}(\normalfont{{Panov} \cite{PAN}}).
Assume that $(f,\phi)$ is non degenerate in the sense of Definition \ref{compacite}. Suppose $u^\epsilon$, $\epsilon>0$, is a sequence such that
\begin{align*}
&\exists d>1, \forall s,r\in\mathbb R\mbox{  with } s<r\\
&T_{s,r}(u^\epsilon)_t+\mathop{\rm div} \Bigl(f(T_{s,r}(u^\epsilon))-\nabla\phi(T_{s,r}(u^\epsilon))\Bigr)\mbox{ is pre-compact in } W_{\mbox{{\tiny Loc}}}^{-1,d}(Q).
\end{align*}
 Moreover, suppose $u^\epsilon$, $f(u^\epsilon)$, $\phi_\epsilon(u^\epsilon)$ are equi-integrable locally on $Q$. Then, there exists subsequence
$(u^\epsilon)_{\epsilon}$ that converges in $L^1_{\mbox{{\tiny Loc}}}(Q)$.
\end{theorem}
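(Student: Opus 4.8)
The plan is to combine Young measures with the localization principle for H-measures (the microlocal defect measures of Tartar and G\'erard), and to exploit the non-degeneracy condition of Definition \ref{compacite} to annihilate the resulting defect.

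Since $(u^\epsilon)$ is equi-integrable and valued in $[0,u_{\max}]$, I would first extract a subsequence that generates a Young measure $\nu_{t,x}$, so that $g(u^\epsilon)\rightharpoonup\langle\nu_{t,x},g\rangle$ weakly in $L^1_{\mathrm{Loc}}(Q)$ for every continuous $g$. Strong $L^1_{\mathrm{Loc}}$ convergence of $(u^\epsilon)$ is then equivalent to $\nu_{t,x}$ being a Dirac mass $\delta_{u(t,x)}$ for a.e.\ $(t,x)$; it suffices to show that the variance $\langle\nu_{t,x},\mathrm{Id}^2\rangle-\langle\nu_{t,x},\mathrm{Id}\rangle^2$ vanishes a.e.

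Next I would attach an H-measure $\mu$, living on $Q\times\mathbb S^{\ell}$ (with $\mathbb S^{\ell}$ the unit cosphere in the frequencies $(\tau,\zeta)$ dual to $(t,x)$), to the oscillating parts of the truncated fields $T_{s,r}(u^\epsilon)$, parametrized by the level variable $\lambda$ as in the kinetic formulation. The hypothesis that $T_{s,r}(u^\epsilon)_t+\mathop{\rm div}\bigl(f(T_{s,r}(u^\epsilon))-\nabla\phi(T_{s,r}(u^\epsilon))\bigr)$ is pre-compact in $W^{-1,d}_{\mathrm{Loc}}(Q)$ is the compensated-compactness input: through the localization principle it confines $\mathrm{supp}\,\mu$ to the zero set of the principal symbol $i\tau+i\,\zeta\cdot f'(\lambda)+|\zeta|^2\phi'(\lambda)$, i.e.\ to the points where simultaneously $\phi'(\lambda)|\zeta|^2=0$ and $\tau+\zeta\cdot f'(\lambda)=0$ with $|\tau|^2+|\zeta|^2=1$. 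On the parabolic range $\lambda>u_c$ one has $\phi'(\lambda)>0$, so the real part forces $\zeta=0$ and then $\tau=0$, contradicting the cosphere constraint; hence $\mu$ carries no mass there and diffusion alone gives compactness. On the hyperbolic range $\lambda\in[0,u_c]$, where $\phi'\equiv0$, only the transport relation $\tau+\zeta\cdot f'(\lambda)=0$ remains.

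The main obstacle is eliminating this remaining hyperbolic contribution, which is the heart of Panov's method. Reading $\mu$ as parametrized by $\lambda$, the transport relation must persist, for fixed $(\tau,\zeta)$ on the cosphere, across the range of levels carrying mass; but Definition \ref{compacite} asserts that $\lambda\mapsto\zeta\cdot f(\lambda)$ is not affine on any non-degenerate subinterval of $[0,u_c]$, so $\zeta\cdot f'(\lambda)$ is non-constant and the relation cannot hold on a set of levels of positive measure. This incompatibility forces $\mu\equiv0$, so every truncation $T_{s,r}(u^\epsilon)$ converges strongly. A monotone limiting argument over the truncation thresholds $s<r$ then transfers strong convergence to $(u^\epsilon)$ itself, yielding $\nu_{t,x}=\delta_{u(t,x)}$ and hence $u^\epsilon\to u$ in $L^1_{\mathrm{Loc}}(Q)$. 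The delicate points are making rigorous the dependence of the H-measure on the level variable and verifying that non-affineness of the flux genuinely prohibits a nontrivial defect compatible with the transport constraint.
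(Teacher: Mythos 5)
The paper itself offers no proof of this statement: it is quoted directly from Panov \cite{PAN}, so the comparison must be with the method of that reference, whose broad outline (Young measures, an H-measure attached to the truncations $T_{s,r}(u^\epsilon)$ parametrized by the level $\lambda$, localization, annihilation of the defect via non-degeneracy) your sketch does reproduce. However, two of your steps have genuine gaps. First, the localization principle you invoke is the classical Tartar--G\'erard one, which applies to differential constraints of a single homogeneity; the operator here mixes a first-order part ($\partial_t$ and $\mathrm{div}\,f$) with a second-order part ($\Delta\phi$), and on the isotropic cosphere $|\tau|^2+|\zeta|^2=1$ these two parts scale differently under the dilations defining the H-measure, so the support condition ``$\phi'(\lambda)|\zeta|^2=0$ and $\tau+\zeta\cdot f'(\lambda)=0$'' does not follow from the standard principle. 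The whole point of \cite{PAN} is to construct a variant of H-measures adapted to this non-homogeneity (an anisotropic, parabolic-type normalization of the frequency variables) and to prove the corresponding localization principle; without that adaptation your confinement of $\mathrm{supp}\,\mu$ is unjustified, and it is also where the hypothesis of pre-compactness in $W^{-1,d}_{\mathrm{Loc}}$ with merely $d>1$ (rather than $H^{-1}$) must be absorbed.

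Second, your annihilation step proves less than you need. You argue the transport relation $\tau+\zeta\cdot f'(\lambda)=0$ ``cannot hold on a set of levels of positive measure,'' but Definition \ref{compacite} only forbids $\lambda\mapsto\zeta\cdot f(\lambda)$ from being affine on a nondegenerate \emph{interval}. For a merely Lipschitz $f$, the derivative $\zeta\cdot f'$ can equal a constant on a nowhere dense set of positive measure while $\zeta\cdot f$ is affine on no interval; so a positive-measure level set is perfectly compatible with the stated non-degeneracy and your contradiction evaporates. The missing ingredient, which is the heart of Panov's argument, is the structural step showing that at a point charged by the defect measure the symbol must vanish for \emph{all} $\lambda$ in a nondegenerate interval (essentially the interval between the left and right limits of the distribution function of the Young measure at that point); only then does non-affineness on intervals apply and force $\mu\equiv0$. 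Your final monotone passage from the truncations to $u^\epsilon$ itself is the least problematic step (and in the application of this paper it is vacuous, since $0\leq u^\epsilon\leq u_{\max}$ makes $T_{s,r}(u^\epsilon)=u^\epsilon$ for $s\leq 0$, $r\geq u_{\max}$), but the two points above are real gaps relative to the cited proof.
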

\begin{proof}[Proof of Theorem~\ref{existence1}](Sketched)
The proof of existence of entropy solution uses Theorem \ref{panov} to justify the passage to the limit in $Q$ (for more details, see \cite{GB}) and Lemma \ref{boundary} for boundary integral.
\end{proof}
\section{Uniqueness result of entropy solution in one space dimension}
The main result  of this section is the following theorem:
\begin{theorem}\label{unicite}
Suppose that $\Omega=(a,b)$ is a bounded interval  of $\mathbb R$, then $(P)$ admits a unique entropy solution.
\end{theorem}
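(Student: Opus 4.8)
The plan is to prove uniqueness indirectly: I will show that every entropy solution of $(P)$ is the integral solution of the abstract evolution problem \eqref{solintegrale} associated to a suitable m-accretive operator $A$ on $L^1(\Omega)$. Since the general theory of nonlinear semigroups (see \cite{BGP}) guarantees that, for fixed data $(u_0,h)$, an m-accretive operator admits \emph{at most one} integral solution, uniqueness of the entropy solution follows immediately. The two substantial tasks are therefore: (i) to identify $A$, whose formal action is $A(w)=\mathop{\rm div} f(w)-\Delta\phi(w)$ subject to the Robin condition $(f(w)-\nabla\phi(w)).\eta=b(w)$ on $\partial\Omega$, and to check that it is m-accretive in $L^1$; and (ii) to verify that an entropy solution $u$ satisfies the integral inequality against every pair $(w,z)\in A$, with right-hand side $h\equiv0$.

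First I would analyse the stationary problem. For $z\in L^1(\Omega)$ and $\lambda>0$ the resolvent equation $w+\lambda\bigl(\mathop{\rm div} f(w)-\Delta\phi(w)\bigr)=z$ with the Robin condition defines $(I+\lambda A)^{-1}$, and the $L^1$-contraction of this resolvent (accretivity) follows from a Kruzhkov-type comparison of two stationary solutions, density of the range giving the m-part. The decisive point—and the reason the whole argument is confined to $\Omega=(a,b)\subset\mathbb{R}$—is that I need a \emph{regular} stationary solution $w$, namely one whose total flux $F(w):=f(w)-\nabla\phi(w)$ is continuous up to the endpoints and for which the Robin condition holds pointwise, $F(w)(b).\eta=b(w(b))$ and $F(w)(a).\eta=b(w(a))$. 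In one dimension the stationary equation reduces to the ODE $F(w)'=\tfrac1\lambda(z-w)$, so $F(w)$ is automatically absolutely continuous on $[a,b]$ and possesses genuine flux traces at the endpoints; in higher dimension no such control of the normal flux is available, which is exactly why uniqueness remains open there, as already for the zero-flux case of \cite{GB}.

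The comparison between an entropy solution $u=u(t,x)$ and a regular stationary solution $w=w(y)$ is carried out by Kruzhkov's doubling of variables. For each fixed $y$ I would apply the Carrillo-type inequality \eqref{procar} of Proposition \ref{carrillo} with the constant $k=w(y)$ and with the free vector $D\in\mathbb{R}^\ell$ chosen equal to the diffusion gradient $\nabla\phi(w)(y)$, so that the interior flux difference becomes exactly the total-flux difference $\bigl(f(u)-\nabla\phi(u)\bigr)-\bigl(f(w)-\nabla\phi(w)\bigr)$, while the boundary integrand $|b(k)-(f(k)-D).\eta|$ collapses, as the doubling parameter is removed and $y\to x\in\partial\Omega$, to $|b(w)-F(w).\eta|=0$ by the pointwise Robin condition for the regular $w$. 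Testing with $\xi(t,x,y)=\psi(t)\,\rho_n(x-y)$ for a mollifier $\rho_n$, integrating in $y$ against the stationary formulation for $w$, and letting $n\to\infty$, the degenerate-diffusion terms are absorbed precisely by the $\overline{\lim}$ dissipation term $\int\nabla\phi(u).(\nabla\phi(u)-D)\,\xi$ on the right of \eqref{procar} (for values $w(y)\in[0,u_c]$ in the flat region one simply uses the plain entropy inequality \eqref{ESP}, where no diffusion term is present). Only the good terms survive in the limit, yielding
\begin{align*}
\frac{d}{dt}||u(t)-w||_{L^1(\Omega)}\leq\int_\Omega sign_0(u(t)-w)(h-z)+\int_{\{u=w\}}|h-z|
\end{align*}
in $\mathcal{D}'(0,T)$ with $h\equiv0$; this is precisely the integral-solution inequality against $(w,z)\in A$. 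Proving it for the dense class of regular pairs and using the $L^1$-closedness of $A$ then delivers it for all $(w,z)\in A$, so $u$ is the integral solution and uniqueness follows.

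I expect the main obstacle to be the boundary analysis inside the doubling argument. One must show that the two boundary integrals surviving in \eqref{procar}—the term $+\int|b(k)-(f(k)-D).\eta|\,\xi$, arranged above to vanish in the limit, and the term $-\int|b(u)-b(k)|\,\xi$—combine with the boundary contribution produced by the stationary formulation of $w$ to give a net boundary term with the right (nonpositive) sign as $n\to\infty$. This is exactly where the continuity of $F(w)$ up to $\partial\Omega$ and hypothesis \eqref{btrace}, which guarantees that $b(w)=\beta\circ\phi(w)$ has a genuine $L^2$ trace on $\partial\Omega$ (cf. Remark \ref{entweak} and Lemma \ref{boundary}), are both indispensable, while hypothesis \eqref{bmax} rules out a boundary layer so that the Robin condition is assumed literally rather than in a relaxed sense (cf. \cite{BNL}). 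Reconciling these boundary terms uniformly in $n$, simultaneously with the interior passage to the limit governed by \eqref{procar}, is the technical heart of the proof, and it is precisely the step that one cannot close in several space dimensions.
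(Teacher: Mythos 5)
Your overall strategy is the same as the paper's: reduce uniqueness to uniqueness of integral solutions for the m-accretive operator generated by the stationary problem $(S)$ (Proposition \ref{maccretive}, Corollary \ref{unik1}), obtain in one dimension a stationary solution whose total flux is continuous up to the endpoints with the Robin condition holding pointwise (your ODE argument is exactly the content of Proposition \ref{traceforte}), and compare the evolutionary entropy solution with such a regular stationary solution by Kruzhkov doubling combined with the Carrillo inequality \eqref{procar}, taking $D$ equal to the stationary diffusion gradient. However, the one step you explicitly flag as ``the technical heart'' and leave unresolved is precisely where the paper's proof contains a concrete device that your plan lacks, and without it the argument does not close. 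With your symmetric test kernel $\rho_n(x-y)$, the doubling produces (cf.\ \eqref{u7}) the boundary integral $\int_0^T\!\int_\Omega\!\int_{y\in\partial\Omega}\left|b(v)-(f(v)-\phi(v)_x).\eta(y)\right|\xi\, d\sigma dx dt$, which involves the total flux of the \emph{evolutionary} solution evaluated near the boundary. This quantity has no known strong trace on $\Sigma$ (the flux regularity of Proposition \ref{traceforte} is available only for the stationary problem, ``and even in dimension $\ell=1$'' it is out of reach for $(P)$, as the paper notes), it carries the wrong sign in \eqref{u7}, and no cancellation against the stationary boundary terms is available; hoping that the terms ``combine with the right (nonpositive) sign'' is exactly what fails.

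The paper's resolution is an asymmetric choice of kernel: $\rho_n(x,y)=\delta_n(\Delta)$ with $\Delta=(1-\frac{1}{n(b-a)})x-y+\frac{a+b}{2n(b-a)}$, a mollifier skewed and shifted so that $\rho_n\in\mathcal{D}(\overline\Omega\times\overline\Omega)$ and $\rho_n$ vanishes identically on $\Omega\times\partial\Omega$. With this choice the problematic integral above is exactly zero for every $n$, while the companion boundary term with $x\in\partial\Omega$, which involves only the \emph{stationary} flux $b(u)-(f(u)-\phi(u)_y).\eta(x)$, tends to zero as $n\to\infty$ because $b(u)-(f(u)-\phi(u)_y).\eta(y)\in\mathcal{C}_0([a,b])$ by Proposition \ref{traceforte}. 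You should replace your symmetric mollifier by this one-sided kernel (pushing $y$ into the interior relative to $x$); the rest of your outline then matches the paper's proof. Two smaller points: in one dimension Proposition \ref{traceforte} yields a \emph{regular} solution for every datum $g$, so every pair in the graph of $A_{f,\phi,b}$ is already regular and your closing density-plus-$L^1$-closedness step is unnecessary; and the terms $-\int|b(u)-b(k)|\xi$ are harmless, since after doubling they appear with the good sign on the right-hand side of \eqref{u7} and may simply be discarded.
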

In order to study uniqueness in the framework of nonlinear semigroup theory, we consider for all bounded function $g$ taking values on $[0,u_{\max}]$,  the stationary problem $(S)$ associated to  problem $(P)$:
\begin{equation*}
(S)\left \{\begin{array}{rl}
  u+\mathop{\rm div} (f(u)-\nabla\phi(u))&=g\;\mbox{ in } \; \Omega,\\
  b(u)-(f(u)-\nabla\phi(u)).\eta&=\;\mbox{ on } \; \partial\Omega.
\end{array} \right.
\end{equation*}
The notion of entropy solution of $(S)$ correspond to the time-independent entropy solution of $(P)$ with source term $g-u$. In the case where $\Omega=(a,b)$ is a bounded interval  of $\mathbb R$, we have an important result, which states that, the total flux is regular at the points $a$ and $b$. This kind of regularity seem hard to obtain in multiple space dimensions for $(S)$, and even in dimension $\ell=1$ for $(P)$.
\begin{proposition}\label{traceforte}
For all measurable function $g$ taking values in $[0,u_{\max}]$ the problem $(S)$ admits a solution $u$ such that $(f(u)-\phi(u)_y)$  is continuous up the boundary, i.e., $(f(u)-\phi(u)_y)\in \mathcal{C}([a,b])$. Moreover, $b(u)-(f(u)-\phi(u)_y).\eta(y)$ is zero at $y=a$ and $y=b$. (Here $\eta(a)=-1$ and $\eta(b)=+1$).
\end{proposition}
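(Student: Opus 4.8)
The plan is to construct the required solution by vanishing viscosity, mimicking the construction behind Theorems \ref{existence1} and \ref{existence2}, and then to exploit the special one-dimensional structure to upgrade the flux to a continuous function on $[a,b]$. Writing $\Omega=(a,b)$ and denoting the total flux by $F:=f(u)-\phi(u)_y$, the interior equation of $(S)$ reads simply $u+F_y=g$, that is $F_y=g-u$. Since any candidate solution satisfies $0\le u\le u_{\max}$ and $0\le g\le u_{\max}$, the right-hand side $g-u$ lies in $L^\infty(a,b)$; thus $F$ is a primitive of a bounded function and automatically admits a Lipschitz, hence continuous, representative on $[a,b]$. The continuity of the flux is therefore not the real difficulty: the two genuine tasks are (i) producing a solution of $(S)$ carrying this structure and (ii) passing to the pointwise boundary identity.

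First I would regularize $(S)$ into the strictly elliptic problem $(S_\epsilon)$, replacing $\phi$ by $\phi_\epsilon=\phi+\epsilon\,\mathrm{Id}$ and $b$ by $b_\epsilon=\beta\circ\phi_\epsilon$, exactly as for $(P_\epsilon)$. Solvability of $(S_\epsilon)$ with $u^\epsilon\in H^1(a,b)$, $0\le u^\epsilon\le u_{\max}$, follows from the same monotone-operator / Galerkin scheme used for Theorem \ref{existence2}, the zero-order term $u^\epsilon$ only improving coercivity. Testing the weak formulation of $(S_\epsilon)$ against functions that do not vanish at the endpoints produces the natural boundary identity $b_\epsilon(u^\epsilon)=F^\epsilon\cdot\eta$ at $y=a,b$, where $F^\epsilon:=f(u^\epsilon)-\phi_\epsilon(u^\epsilon)_y$. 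Crucially, from $F^\epsilon_y=g-u^\epsilon$ one gets $\|F^\epsilon\|_{W^{1,\infty}(a,b)}\le C$ uniformly in $\epsilon$, so the family $(F^\epsilon)_\epsilon$ is equi-Lipschitz.

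Next I would pass to the limit. The compactness of $(u^\epsilon)_\epsilon$ in $L^1(a,b)$ is obtained as in the proof of Theorem \ref{existence1} (via the non-degeneracy assumption and Theorem \ref{panov}), giving $u^\epsilon\to u$ a.e.; in one dimension the compact embedding $H^1(a,b)\hookrightarrow\mathcal{C}[a,b]$ applied to \eqref{estimateur2} then yields $\phi_\epsilon(u^\epsilon)\to\phi(u)$ uniformly, while the equi-Lipschitz bound together with Arzel\`a--Ascoli gives $F^\epsilon\to F$ uniformly with $F\in\mathcal{C}[a,b]$. One identifies $F=f(u)-\phi(u)_y$ and $u+F_y=g$ in $\mathcal{D}'(a,b)$, and checks that $u$ is an entropy solution of $(S)$ by passing to the limit in the entropy inequalities exactly as for Theorem \ref{existence1}, using Lemma \ref{boundary} for the boundary terms.

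Finally, for the boundary condition I would invoke hypothesis \eqref{btrace}: since $\beta$ is Lipschitz and $\phi_\epsilon(u^\epsilon)\to\phi(u)$ uniformly, $b_\epsilon(u^\epsilon)=\beta(\phi_\epsilon(u^\epsilon))\to\beta(\phi(u))=b(u)$ uniformly on $[a,b]$; evaluating the identity $b_\epsilon(u^\epsilon)=F^\epsilon\cdot\eta$ at the two endpoints and letting $\epsilon\to0$ gives $b(u)(a)+F(a)=0$ and $b(u)(b)-F(b)=0$, which is precisely $b(u)-(f(u)-\phi(u)_y)\cdot\eta=0$ at $y=a,b$ with $\eta(a)=-1$ and $\eta(b)=+1$. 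I expect the genuine obstacle to be exactly what confines the statement to one dimension: the uniform Lipschitz control of $F^\epsilon$ rests on $F^\epsilon_y=g-u^\epsilon\in L^\infty$ and on the embedding $H^1\hookrightarrow\mathcal{C}$, both of which fail for $\ell\ge2$, where a bound on $\mathrm{div}\,F^\epsilon$ in $L^\infty$ yields no pointwise control of $\nabla\phi(u)$ and hence no continuity of the flux up to $\partial\Omega$.
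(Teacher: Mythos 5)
Your proposal is correct and follows essentially the same route as the paper, which states Proposition \ref{traceforte} without a written proof and relies on the method of \cite{GB}: vanishing viscosity for the stationary problem, the one-dimensional observation that $F^\epsilon_y=g-u^\epsilon\in L^\infty(a,b)$ makes the total flux equi-Lipschitz up to $[a,b]$, Panov-type compactness in the interior, and endpoint evaluation of the viscous Robin identity using \eqref{btrace}. The only detail worth adding is an anchor for the uniform $L^\infty$ bound on $F^\epsilon$ (equi-Lipschitz controls only the oscillation), which you already have at hand either from the uniform $H^1$ bound of type \eqref{estimateur2} on $\phi_\epsilon(u^\epsilon)$ or from $F^\epsilon(a)=-b_\epsilon(u^\epsilon(a))$ with $b_\epsilon(u^\epsilon)$ uniformly bounded.
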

From now, let's define the operator $A_{f,\phi,b}$ on $L^1$ associated with  regular solutions of $(S)$ by its graph:
\begin{equation*}
(u,z)\!\in A_{f,\phi,b}=\!\left \{\begin{array}{ll}
 \!\!u \mbox{ such that } u \mbox{ is an entropy solution of } (S), \mbox{ with } g=u+z\!\!\!
\end{array} \right\}\!\!.
\end{equation*}
 \begin{proposition}\label{maccretive}
\begin{enumerate}
\item $A_{f,\phi,b}$ is accretive in $L^1(\Omega)$.
\item For all $\lambda$ sufficiently small, $R(I\!+\lambda A_{f,\phi,b})$ contains $L^1(\Omega;[0,u_{\max}])$.
\item $\overline{D(A_{f,\phi,b})} = L^1(\Omega;[0,u_{\max}])$.
\end{enumerate}
\end{proposition}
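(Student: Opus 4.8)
The plan is to treat the three assertions in the stated order, with accretivity as the analytic core and the range and density properties following from it together with the existence of regular stationary solutions (Proposition~\ref{traceforte}). For assertion~(1) I would take two pairs $(u_1,z_1),(u_2,z_2)\in A_{f,\phi,b}$, so that each $u_i$ is a regular entropy solution of $(S)$ with datum $g_i=u_i+z_i$, and aim at the comparison estimate $\int_\Omega (u_1-u_2)^+\,dy\le\int_\Omega (g_1-g_2)^+\,dy$, which expresses the $L^1$-contractivity of the resolvent and is equivalent to accretivity of $A_{f,\phi,b}$. The tool is Kruzhkov's doubling of variables combined with Carrillo's inequality of Proposition~\ref{carrillo}, applied to $u_1$ with $k=u_2(\cdot)$ and symmetrically to $u_2$ with $k=u_1(\cdot)$; the source term $g_i-u_i=z_i$ here plays the role taken by the time derivative in the evolution case. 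Adding the two inequalities and letting the doubling parameter tend to zero, the parabolic dissipation contributions (the $\nabla\phi$ terms together with the $\overline{\lim}$ term on the right of \eqref{procar}) assemble into a nonnegative quantity by Carrillo's argument and are discarded, while the interior convective fluxes telescope into $\int_\Omega\operatorname{sign}^+(u_1-u_2)(g_1-g_2)$.

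The delicate point, and the main obstacle, is the treatment of the boundary integrals of \eqref{ESP}. Here I would exploit that at least one of the two solutions is regular in the sense of Proposition~\ref{traceforte}: its total flux $F_i=f(u_i)-\phi(u_i)_y$ is continuous up to $\{a,b\}$ and the relation $b(u_i)=F_i\cdot\eta$ holds pointwise there. Combining this with the factorisation $b=\beta\circ\phi$ of \eqref{btrace} — which both gives a sense to the trace of $b(u)$ (Remark~\ref{entweak}) and forces $\operatorname{sign}(b(u_1)-b(u_2))$ to agree with $\operatorname{sign}(u_1-u_2)$ on the set where $\phi$ is strictly increasing — and with \eqref{bmax}, which guarantees that the boundary condition is not relaxed, the boundary contributions combine with the correct sign and do not obstruct the contraction. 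This yields $\int_\Omega(u_1-u_2)^+\le\int_\Omega(g_1-g_2)^+$, hence accretivity.

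For assertion~(2), solving $u+\lambda A_{f,\phi,b}u\ni h$ for $h\in L^1(\Omega;[0,u_{\max}])$ amounts, after rescaling the nonlinearities $(f,\phi,b)$ by $\lambda$ — a rescaling that preserves \eqref{f}, \eqref{btrace} and \eqref{bmax} — to solving the stationary problem $(S)$ with datum $h$. For $\lambda$ small the construction underlying Proposition~\ref{traceforte} then furnishes a regular entropy solution $u$, that is $(u,(h-u)/\lambda)\in A_{f,\phi,b}$, and the maximum principle for $(S)$ keeps $u$ in $[0,u_{\max}]$. Hence $R(I+\lambda A_{f,\phi,b})\supseteq L^1(\Omega;[0,u_{\max}])$.

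Finally, for the density of the domain in assertion~(3), I would set $u_\lambda=(I+\lambda A_{f,\phi,b})^{-1}g\in D(A_{f,\phi,b})$ for $g\in L^1(\Omega;[0,u_{\max}])$ and show $u_\lambda\to g$ in $L^1(\Omega)$ as $\lambda\to0^+$. Since $u_\lambda-g=-\lambda\,\mathop{\rm div}(f(u_\lambda)-\nabla\phi(u_\lambda))$ and the flux stays bounded through the a priori estimates, the right-hand side vanishes in the limit; for rough data one first treats smooth $g$ and then passes to general $g$ using the non-expansiveness of the resolvent established in~(1). Together with the inclusion $D(A_{f,\phi,b})\subseteq L^1(\Omega;[0,u_{\max}])$ coming again from the maximum principle, this gives $\overline{D(A_{f,\phi,b})}=L^1(\Omega;[0,u_{\max}])$, completing the proof.
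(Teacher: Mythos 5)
Your outline matches the paper's intended proof: the paper gives no details for Proposition \ref{maccretive}, deferring entirely to \cite{GB}, and exactly your route is the one followed there --- accretivity via Kruzhkov doubling of variables combined with Carrillo's inequality (Proposition \ref{carrillo}), with the boundary terms controlled by the flux regularity and the pointwise boundary relation of Proposition \ref{traceforte}, the range condition via existence for the $\lambda$-rescaled stationary problem (the rescaling indeed preserving \eqref{f}, \eqref{btrace}, \eqref{bmax}), and density via resolvent convergence $u_\lambda\to g$ first for regular $g$ and then by non-expansiveness. The only soft spot is in assertion (3), where the identity $u_\lambda-g=-\lambda\,\mathop{\rm div}(f(u_\lambda)-\nabla\phi(u_\lambda))$ together with the a priori bounds yields only weak-$*$ (and $H^{-1}$) convergence, and the strong $L^1$ convergence is standardly recovered by exploiting the stationary entropy inequalities with constants $k$ localized where $g\approx k$; this is a routine repair that does not affect the approach.
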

For the proof of this proposition, we can refer to \cite{GB}.\\
According to the general results of  \cite{BGP}, it follows existence and uniqueness of integral solution in the sense of Definition \ref{entrsol}:
\begin{corollary}\label{unik1}
Let $\Omega=(a,b)$, $u_0,\hat{u}_0\in L^1(\Omega)$ and $h,\hat{h}\in L^1(Q)$. Let $v,\hat{v}$ be integral solutions of \eqref{solintegrale} (with operator $A_{f,\phi,b}$ ) associated with the data $(u_0,h)$ and $(\hat{u}_0,\hat{h})$, respectively. Then for a.e. $t\in[0,T)$.
\begin{equation*}
||v(t)-\hat{v}(t)||_{L^1}\leq||u_0-\hat{u}_0||_{L^1}+\displaystyle\int_0^t ||h(\tau)-\hat{h}(\tau)||_{L^1}dt.
\end{equation*}
\end{corollary}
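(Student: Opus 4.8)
The plan is to read off Corollary~\ref{unik1} from the abstract theory of nonlinear semigroups, using Proposition~\ref{maccretive} as the sole analytic input. The three items of that proposition state precisely that $A_{f,\phi,b}$ is m-accretive in $L^1(\Omega)$ with $\overline{D(A_{f,\phi,b})}=L^1(\Omega;[0,u_{\max}])$: accretivity, the range condition $R(I+\lambda A_{f,\phi,b})\supseteq L^1(\Omega;[0,u_{\max}])$ for small $\lambda$, and density of the domain. Once these are granted, the B\'enilan--Crandall--Pazy theory \cite{BGP} guarantees that for each datum $(u_0,h)$ the abstract problem \eqref{solintegrale} has a unique integral solution, obtained as the uniform limit of the backward Euler (Crandall--Liggett) scheme, and that this solution depends on the data in the quantitative way asserted.

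First I would record the discrete contraction estimate. By accretivity the resolvents $J_\lambda=(I+\lambda A_{f,\phi,b})^{-1}$ are $L^1$-contractions, and by the range condition they are defined on all of $L^1(\Omega;[0,u_{\max}])$ for $\lambda$ small. Fixing a partition of $[0,T]$ with step $\lambda$ and averaged sources $h^j,\hat h^j$, the implicit Euler iterates $u^j=J_\lambda(u^{j-1}+\lambda h^j)$ and $\hat u^j=J_\lambda(\hat u^{j-1}+\lambda\hat h^j)$ satisfy, by the contraction property of $J_\lambda$,
\begin{equation*}
\|u^j-\hat u^j\|_{L^1}\leq\|u^{j-1}-\hat u^{j-1}\|_{L^1}+\lambda\,\|h^j-\hat h^j\|_{L^1}.
\end{equation*}
Summing over $j$ yields the discrete analogue of the claimed inequality, with $\sum_j\lambda\|h^j-\hat h^j\|_{L^1}$ on the right.

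Next I would pass to the limit $\lambda\to0$. By the Crandall--Liggett convergence theorem the step functions built from $(u^j)$ and $(\hat u^j)$ converge in $\mathcal{C}([0,T];L^1(\Omega))$ to the integral solutions $v$ and $\hat v$ respectively, while the Riemann sums $\sum_j\lambda\|h^j-\hat h^j\|_{L^1}$ converge to $\int_0^t\|h(\tau)-\hat h(\tau)\|_{L^1}\,d\tau$. Taking limits in the discrete estimate gives exactly
\begin{equation*}
\|v(t)-\hat v(t)\|_{L^1}\leq\|u_0-\hat u_0\|_{L^1}+\int_0^t\|h(\tau)-\hat h(\tau)\|_{L^1}\,d\tau.
\end{equation*}
Equivalently, one may argue directly from the integral-solution inequality of \eqref{solintegrale}: test it for $v$ against the pair $(J_\lambda\hat v(s),\lambda^{-1}(\hat v(s)-J_\lambda\hat v(s)))\in A_{f,\phi,b}$, symmetrize in the two solutions after doubling the time variable, and send $\lambda\to0$ using $J_\lambda\hat v(s)\to\hat v(s)$ (valid since $\hat v(s)\in\overline{D(A_{f,\phi,b})}$); the crude bound of the bracket term by $\|h-\hat h\|_{L^1}$ then produces the same estimate.

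The point I want to stress is that the real difficulty lies entirely upstream, in Proposition~\ref{maccretive} rather than in this corollary. Establishing accretivity of $A_{f,\phi,b}$ rests on the $L^1$ comparison between an arbitrary entropy solution of the stationary problem $(S)$ and a \emph{regular} solution furnished by Proposition~\ref{traceforte}, carried out by Kruzhkov doubling and exploiting that the total flux $f(u)-\phi(u)_y$ has a genuine trace at the endpoints; this is exactly the step available only when $\Omega=(a,b)\subset\mathbb{R}$. Granting Proposition~\ref{maccretive}, Corollary~\ref{unik1} is a routine invocation of \cite{BGP}, and the restriction to one space dimension is inherited from that proposition.
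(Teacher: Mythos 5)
Your proposal follows exactly the paper's route: the paper derives Corollary~\ref{unik1} directly from Proposition~\ref{maccretive} (accretivity, range condition, density of the domain) by invoking the general nonlinear semigroup results of \cite{BGP}, which is precisely your argument, merely with the standard Crandall--Liggett/implicit Euler details spelled out. Your closing observation that the one-dimensional restriction and the real analytic work live upstream in Propositions~\ref{traceforte} and~\ref{maccretive} also matches the paper's presentation.
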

Adapted to our case, we have the following result
\begin{theorem}\label{unik}
Let $\Omega=(a,b)$. Let $v$ be an entropy solution of $(P)$ and $u$ be an entropy solution of $(S)$. Then
\begin{equation}\label{insol}
\frac{d}{dt}||v(t)-u||_{L^1(\Omega)}\leq\displaystyle\int_\Omega sign(v-u)(u-g)dx  \mbox{ in  } \mathcal{D}'(0,T).
\end{equation}
In particular, $v$ is  an integral solution of \eqref{solintegrale} with $h=0$.
\end{theorem}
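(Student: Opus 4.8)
The plan is to run a Kruzhkov doubling of variables between the evolution entropy solution $v=v(t,x)$ and the regular stationary entropy solution $u=u(y)$ furnished by Proposition~\ref{traceforte}, absorbing the degenerate diffusion through the Carrillo inequality of Proposition~\ref{carrillo}. First I would fix a nonnegative $\psi\in\mathcal C_0^\infty(0,T)$ and a sequence of symmetric mollifiers $\rho_n$ on $\mathbb R$, and take the doubled test function $\xi_n(t,x,y)=\psi(t)\rho_n(x-y)$. In Proposition~\ref{carrillo}, written for $v$, I would choose the level $k=u(y)$ and the Carrillo vector $D=\phi(u)_y(y)$ — both legitimate because the regularity of $u$ makes $u(y)$ and $\phi(u)_y(y)$ honest continuous functions on $[a,b]$ — and integrate the resulting inequality in $y$. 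For the stationary side I would exploit the regularity of $u$ directly: since its total flux $F(u):=f(u)-\phi(u)_y$ is continuous up to the endpoints and satisfies $F(u)\cdot\eta=b(u)$ pointwise at $a,b$, I can integrate by parts in $y$ through the weak form $(F(u))_y=g-u$ with level $c=v(t,x)$, integrating in $(t,x)$.

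Adding the two contributions, the standard cancellations occur. Because $\partial_x\rho_n(x-y)=-\partial_y\rho_n(x-y)$ and the Kruzhkov entropy flux is antisymmetric in its two arguments, the convection terms $sign(v-u)(f(v)-f(u))$ cancel in the limit $n\to\infty$. The genuinely parabolic contributions are controlled by the Carrillo device: the right-hand dissipation of \eqref{procar} carried with $D=\phi(u)_y$ combines with the diffusion integrated by parts on the stationary side into the single nonnegative quantity $\overline{\lim}_{\sigma\to0}\frac1\sigma\int_{\{-\sigma<\phi(v)-\phi(u)<\sigma\}}|\phi(v)_x-\phi(u)_y|^2\,\xi_n\ge0$; since this sits on the lower-bound side of the summed inequality, it may be discarded. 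The source term of $(S)$ collapses, as $n\to\infty$, to $\int_0^T\!\int_\Omega sign(v-u)(u-g)\,\psi\,dx\,dt$, which is exactly the right-hand side of \eqref{insol}, while the time term yields $\int_0^T\!\int_\Omega|v-u|\,\psi'\,dx\,dt$; the initial layer drops out because $\psi(0)=0$.

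The delicate point — and the reason the argument is confined to one space dimension — is the treatment of the boundary integrals, where Proposition~\ref{traceforte} is indispensable. After the cancellations, the surviving boundary contributions come from the dissipative term $-\int_0^T\!\int_{\partial\Omega}|b(v)-b(u)|\,\xi_n\,d\mathcal H^{\ell-1}dt\le0$, which carries the favourable sign and is kept, together with a term of the form $\int_0^T\!\int_{\partial\Omega}|b(u)-F(u)\cdot\eta|\,\xi_n\,d\mathcal H^{\ell-1}dt$ built from the stationary flux near the endpoints. As $n\to\infty$ the mollifier forces the two spatial variables to coincide on $\partial\Omega$, and the continuity of $F(u)$ up to the endpoints together with the pointwise identity $F(u)\cdot\eta=b(u)$ makes this latter term vanish. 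It is precisely this strong trace of the total flux of $u$ — available only in dimension one — that both gives meaning to $\phi(u)_y$ on $\partial\Omega$ and annihilates the boundary defect; in several space dimensions neither the flux trace nor the pointwise boundary condition for the stationary solution is at hand, so one should not expect the boundary terms to close. I expect this boundary bookkeeping, rather than the interior Carrillo estimate, to be the crux of the proof.

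Collecting the above, the limit $n\to\infty$ leaves $\int_0^T\!\int_\Omega|v-u|\,\psi'\,dx\,dt+\int_0^T\!\int_\Omega sign(v-u)(u-g)\,\psi\,dx\,dt\ge0$ for every nonnegative $\psi\in\mathcal C_0^\infty(0,T)$, which is exactly \eqref{insol} in $\mathcal D'(0,T)$. Finally, recalling that a regular entropy solution $u$ of $(S)$ with datum $g$ is the same object as the pair $(u,z)\in A_{f,\phi,b}$ with $z=g-u$, the inequality \eqref{insol} reads $\frac{d}{dt}\|v-u\|_{L^1}\le\int_\Omega sign(v-u)(-z)\,dx$, which is the defining inequality of an integral solution of \eqref{solintegrale} with $h=0$ tested against the arbitrary element $(u,z)$ of the graph of $A_{f,\phi,b}$; hence $v$ is an integral solution, as claimed.
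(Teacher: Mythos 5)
Your overall architecture matches the paper's: double variables between $v(t,x)$ and the regular stationary solution $u(y)$, plug $k=u(y)$ and $D=\phi(u)_y(y)$ into Proposition~\ref{carrillo} for $v$, discard the nonnegative Carrillo dissipation $\mathop{\overline{\lim}}_{\sigma\to0}\frac1\sigma\int_{\{-\sigma<\phi(v)-\phi(u)<\sigma\}}|\phi(v)_x-\phi(u)_y|^2\xi$, use Proposition~\ref{traceforte} to kill the boundary term $|b(u)-(f(u)-\phi(u)_y).\eta(x)|$ as the kernel concentrates, keep the dissipative term $-|b(v)-b(u)|$, and finish with Corollary~\ref{unik1}. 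But there is a genuine gap exactly at the point you yourself call the crux. With the symmetric mollifier $\xi_n=\psi(t)\rho_n(x-y)$, the doubled inequality (the paper's \eqref{u7}) contains a \emph{second} boundary term, $\int_0^T\!\int_{x\in\Omega}\int_{y\in\partial\Omega}\left|b(v)-(f(v)-\phi(v)_x).\eta(y)\right|\xi_n\,d\sigma\,dx\,dt$, coming from testing the stationary entropy/Carrillo inequality for $u$ with level $k=v(t,x)$ and $D=\phi(v)_x$. This term sits on the same side as $\int|v-u|\xi_t$, so any nonvanishing limit would add positive garbage to the Kato inequality and \eqref{insol} would not follow. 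It involves the total flux of $v$ evaluated near the boundary, and $v$ — unlike $u$ — has no strong flux trace (the paper notes this regularity is out of reach even in dimension one for the evolution problem; only the weak normal trace $b(v)$ of Remark~\ref{entweak} is available, and it does not commute with the nonlinearity $sign(v-u)$). A symmetric kernel $\rho_n(x-y)$ does not vanish for $y\in\partial\Omega$ when $x$ is within $1/n$ of the endpoints, so this term survives and you have no tool to estimate it.

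The paper's proof resolves precisely this by an \emph{asymmetric} kernel: $\xi=\theta(t)\rho_n(x,y)$ with $\rho_n(x,y)=\delta_n(\Delta)$, $\Delta=(1-\frac{1}{n(b-a)})x-y+\frac{a+b}{2n(b-a)}$, engineered so that $\rho_n$ vanishes identically for $x\in\Omega$, $y\in\partial\Omega$. Then the dangerous term involving $v$'s flux is exactly zero for every $n$, while all remaining boundary weight falls on $x\in\partial\Omega$, where only $u$'s flux appears and Proposition~\ref{traceforte} applies. Your proposed workaround — bypassing the entropy inequality for $u$ and integrating the weak identity $(f(u)-\phi(u)_y)_y=g-u$ by parts against level $c=v(t,x)$ — does not repair this: inserting $sign(u-c)$ into the weak form requires the Carrillo chain-rule machinery because of the degenerate diffusion, and the same boundary contribution involving $f(v)$ and $\phi(v)_x$ at $y\in\{a,b\}$ reappears in the boundary bracket of the integration by parts. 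So the correct fix is not a different formulation on the $u$ side but a deformation of the kernel's support away from $\{y\in\partial\Omega\}$; with that single modification the rest of your argument, including the final identification of $(u,g-u)\in A_{f,\phi,b}$ and the semigroup conclusion, goes through as in the paper.
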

\begin{proof}[Proof of Theorem~\ref{unik} and Theorem~\ref{unicite}]  We consider $v=v(t,x)$ an entropy solution of $(P)$ and $u=u(y)$  an entropy solution of $(S)$. Consider nonnegative function $\xi=\xi(t,x,y)$ having the property that $\xi(.,.,y)\in \mathcal{C}^\infty([0,T)\times\overline{\Omega})$ for each $y\in\overline{\Omega}$, $\xi(t,x,.)\in \mathcal{C}_0^\infty(\overline{\Omega})$ for each $(t,x)\in[0,T)\times\overline{\Omega}$. Apply the doubling of variables \cite{KRU} in the spirit of \cite{GB}, we obtain this following inequality
\begin{align}\label{u7}
&\displaystyle\int_0^T\!\!\int_\Omega\!\int_\Omega |v-u|\xi_tdydxdt+\displaystyle\int_\Omega\int_\Omega|v_0-u|\xi(0,x,y)dxdy\nonumber\\&+\int_0^T\!\!\!\int_\Omega\!\int_\Omega sign(v-u)\Bigl[(f(v)-\phi(v)_x)-(f(u)+\phi(u)_y)\Bigr].(\xi_x+\xi_y)dxdydt\nonumber\\&+\displaystyle\int_0^T\int_{x\in\partial\Omega}\!\int_\Omega \left|b(u)-(f(u)-\phi(u)_y).\eta(x)\right|\xi dyd\sigma dt\nonumber\\&+\int_0^T\!\!\!\int_\Omega\!\int_{y\in\partial\Omega}\left|b(v)-(f(v)-\phi(v)_x).\eta(y)\right|\xi d\sigma dxdt\nonumber\\&
+\int_0^T\!\!\int_\Omega\!\int_\Omega sign(v-u)(u-g(y))\xi dydxdt\nonumber\\&\geq\displaystyle\int_0^T\!\!\!\int_{x\in\Omega}\int_{y\in\partial\Omega}\left|b(u)-b(v)\right|\xi d\sigma dxdt+\displaystyle\int_0^T\!\!\!\int_{y\in\Omega}\int_{x\in\partial\Omega}\left|b(u)-b(v)\right|\xi d\sigma dydt\nonumber\\&+\mathop{\overline{\lim}}\limits_{\sigma\rightarrow0}\frac{1}{\sigma}\displaystyle\int_0^T\int\!\!\!\int_{{{\Omega_x^c}\times{\Omega_y^c}}\cap\left\{-\sigma<\phi(v)-\phi(u)<\sigma\right\}}|\phi(v)_x-\phi(u)_y|^2\xi dydxdt\geq0.
\end{align}
Next, following the idea of \cite{BF}, we take the test function $\xi(t,x,y)\!=\!\theta(t)\rho_n(x,y)$, where $\theta\!\in\!\mathcal{C}_0^\infty(0,T)$, $\theta\!\geq\!0$, $\rho_n(x,y)\!=\!\delta_n(\Delta)$ and  $\Delta\!=\!(1-\frac{1}{n(b-a)})x-y+\frac{a+b}{2n(b-a)}$. Then, $\rho_n\in \mathcal{D}(\overline\Omega\times\overline\Omega)$ and $\rho_{{n}_{|_{\Omega\times\partial\Omega}}}(x,y)=0$. Due to this choice, $$\displaystyle\int_0^T\!\!\!\int_{x\in\Omega}\int_{y\in\partial\Omega}\left|b(v)-(f(v)-\phi(v)_x).\eta(y)\right|\rho_n\theta dyd\sigma dt=0.$$
By Proposition \ref{traceforte}, $b(u)-(f(u)-\phi(u)_y).\eta(y)\in \mathcal{C}_0([a,b])$. Therefore we have\\ $\left|b(u)-(f(u)-\phi(u)_y).\eta(x)\right|\longrightarrow 0$ when $x\rightarrow y$, i.e, as $n\longrightarrow\infty$. We conclude that $$\lim_{n\rightarrow\infty}\displaystyle\int_0^T\!\!\!\int_{x\in\partial\Omega}\int_{y\in\Omega}\left|b(u)-(f(u)-\phi(u)_y).\eta(x)\right|\rho_n\theta dyd\sigma dt=0.$$
with the calculation detailed in \cite{GB}, we deduce that
\begin{equation*}
\displaystyle\int_0^T\!\!\int_\Omega\!\int_\Omega \theta sign(v-u)\Bigl[(f(v)-\phi(v)_x)-(f(u)-\phi(u)_y)\Bigr].\bigl((\rho_n)_x+(\rho_n)_y\bigr) dydxdt\rightarrow 0.
\end{equation*}
Hence, we get \eqref{insol} by passing to the limit in \eqref{u7} with the above choice of $\xi$.
Thus, the entropy solution $v$ of the problem $(P)$ is an  integral solution of \eqref{solintegrale}. This proves that $v$ is a unique entropy solution  due to Corollary \ref{unik1}.
\end{proof}
\section{Role of hypotheses (\ref{btrace}), (\ref{bmax})  and some numerical illustrations }
The numerical analysis of $(P)$ is not the aim of this paper, although we consider this  alternative in a future work. We assume \eqref{f} holds, $u_c=0.6$ and $u_{\max}=1$. We present briefly the importance of the hypotheses \eqref{btrace}, \eqref{bmax}. We apply  now the ideas developed e.g., in the work  of  Vovelle (\cite{VOV}) to construct a monotone finite volume scheme which take into account the boundary condition. The interval $[0,1]$ is divided into $I$ cells. We initialize the scheme by:
\begin{align}
\forall i\in\{1,...,I\}: u_i^0=\frac{1}{\delta x}\displaystyle\int_{(i-1)\delta x}^{i\delta x} u_0(x)dx,
\end{align}
the numerical approximation solution at $t=n\delta t$ in the cell number $i\in\{2,...,I-1\}$ is :
\begin{align}\label{est1}
 u_i^{n+1}\!=u_i^n\!-\frac{\delta t}{\delta x}\biggl(F(u_{i}^n,u_{i+1}^n)\!-F(u_{i-1}^n,u_{i}^n)\!-\frac{\phi(u_{i+1}^n)\!-2\phi(u_i^n)\!+\phi(u^n_{i-1})}{\delta x}\biggr)
\end{align}
with the boundary conditions taken into account via
\begin{align}\label{esti2}
 u_1^{n+1}\!=u_1^n\!-\frac{\delta t}{\delta x}\biggl(F(u_{1}^n,u_{2}^n)\!-\frac{\phi(u_{2}^n)\!-\phi(u_1^n)}{\delta x}-b(u^n_1)\biggr).
\end{align}
\begin{align}\label{esti3}
 u_I^{n+1}\!=u_I^n\!-\frac{\delta t}{\delta x}\biggl(b(u^n_I)-F(u_{I-1}^n,u_{I}^n)\!+\frac{\phi(u_{I}^n)\!-\phi(u_{I-1}^n)}{\delta x}\biggr).
\end{align}
Here, $F$ is a numerical flux which we assume monotone, consistent, Lipschitz continuous (see \cite{VOV}).  In the sequel, we take $u_0(x)=0.7$ if $x\in[\frac{1}{2},1]$ and $u_0(x)=0$ if $x\in[0,\frac{1}{2}[$. We take  $\delta x=0.01$, $\delta t=\frac{\delta x^2}{5}$, $\phi(u)=(u-0.6)^+$ and consider a numerical solution at time $t=0.12$. Initially, we remove the hypothesis \eqref{bmax}, by taking $f(u)=\frac{u^2}{2}$ and $b(u)=\phi(u)$, we observe numerically  the loss of maximum principle (see Figure \ref{fig1} ), this mean that the the solution u can be greater than $u_{\max}$.  Our entropy formulation requires to choose $b(u)$ in the functional space that permit to define the trace of $b(u)$ on the boundary. In the context where assumption \eqref{btrace} is not taken into account, $b(u)=u$ and $f(u)=u(1-u)1_{[0,1]}$;  numerically, we observe a boundary layer (see Figure \ref{fig2} ) and this is confirmed by theoretical results of \cite{BS}. Now, taking into account assumptions \eqref{bmax}, \eqref{btrace}, with data $f(u)=u(1-u)1_{[0,1]}$; $b(u)=\phi(u)$ the numerical observation shows that the boundary condition at $x=0$ and $x=1$ is verified literally and the numerical solution respect the maximum principle (see Figure \ref{fig3}).
    \begin{figure}[!ht]
       \begin{minipage}[b]{0.4\linewidth}
          \centering \includegraphics[width=2.2in]{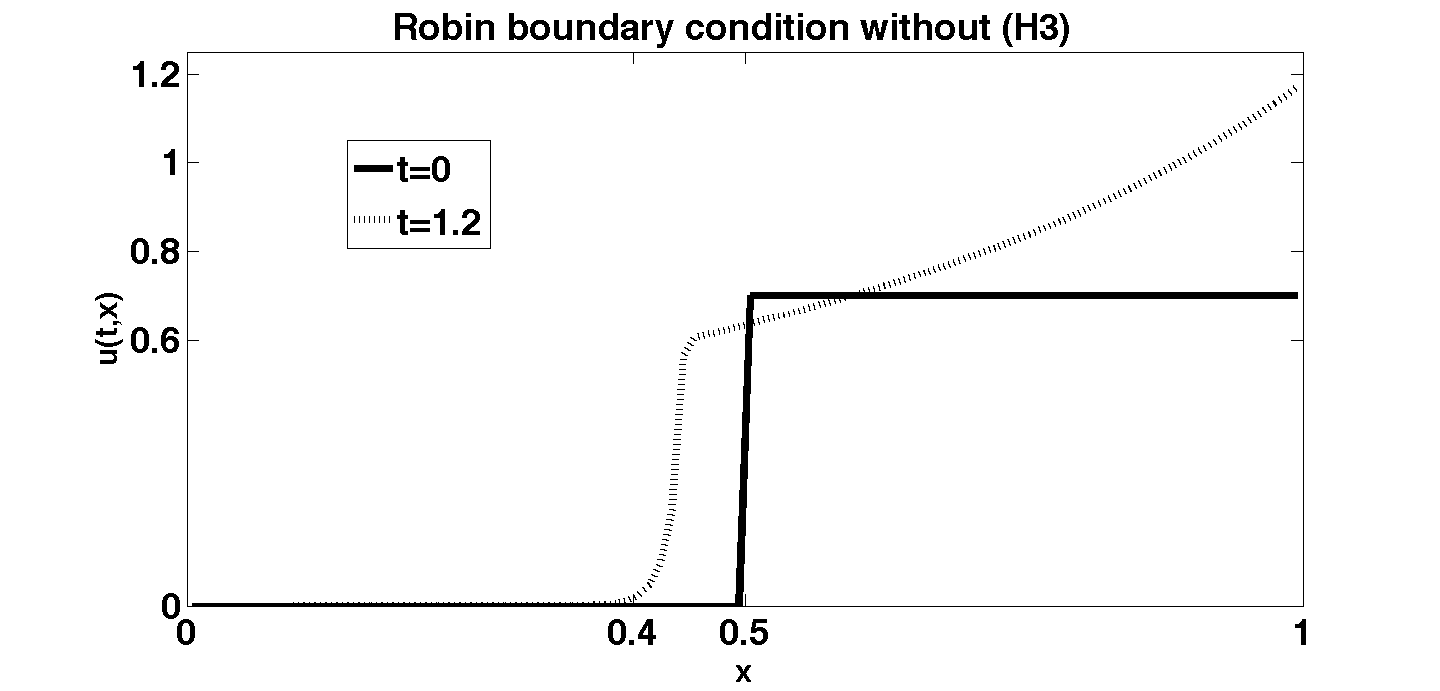}\caption{}\label{fig1}
       \end{minipage}\hfill
       \begin{minipage}[b]{0.4\linewidth}
      
       \centering \includegraphics[width=2.2in]{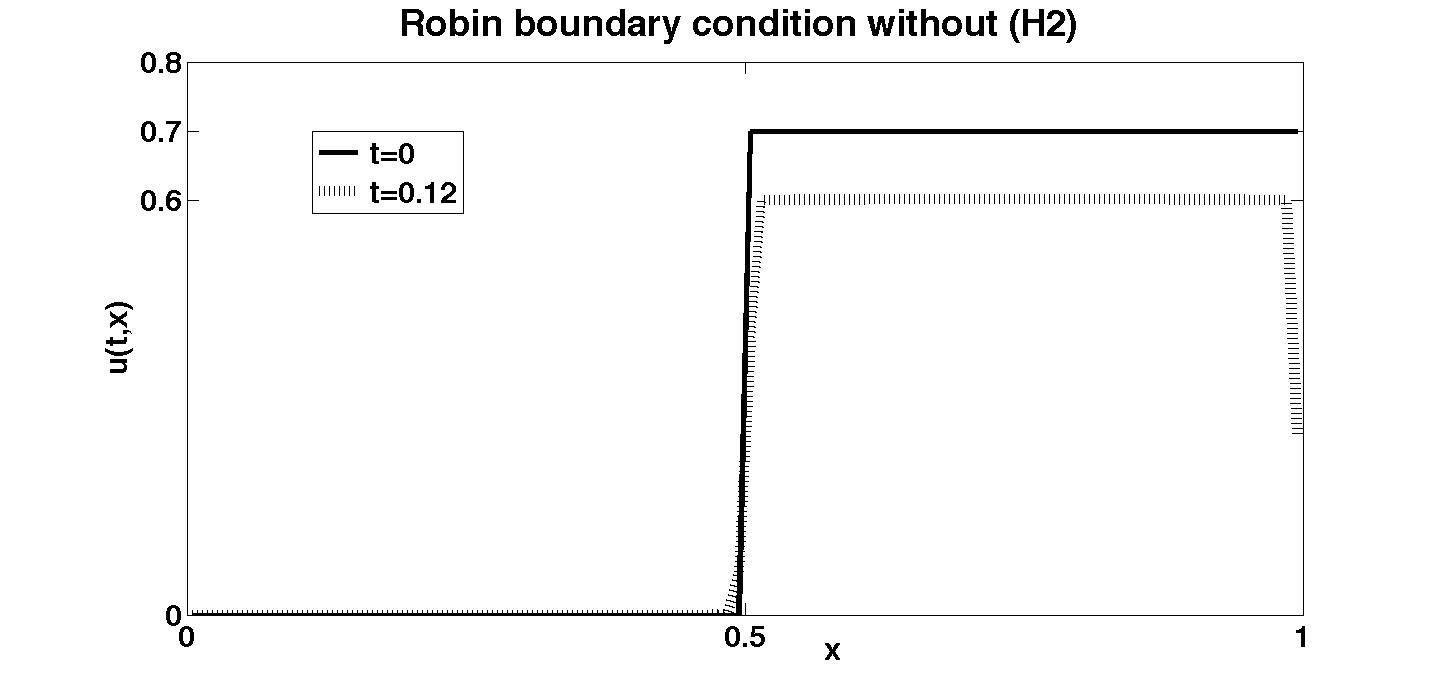}\caption{}\label{fig2}
       \end{minipage}\hfill
       \begin{minipage}[b]{0.4\linewidth}
          \centering \includegraphics[width=2.2in]{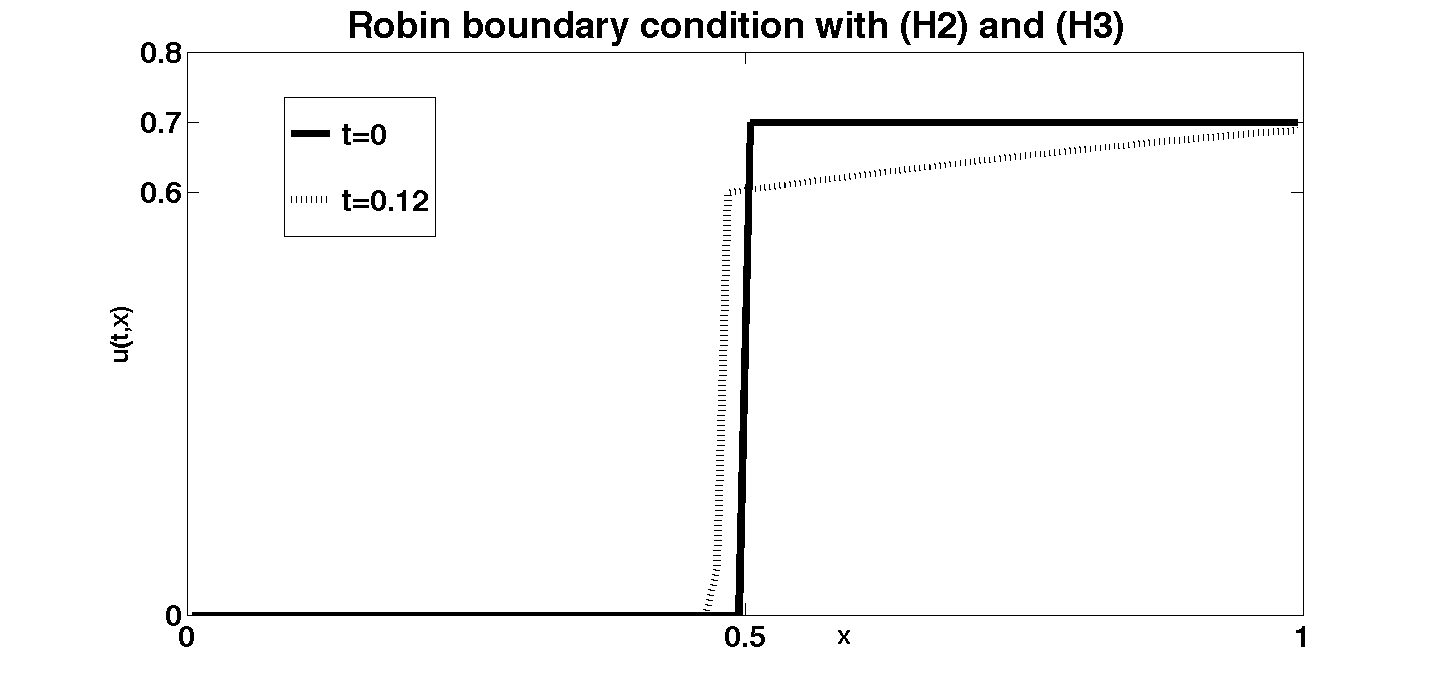}\caption{}\label{fig3}
       \end{minipage}\hfill
    \end{figure}
%
\section*{Acknowledgments} I would like to thank Boris Andreianov for his thorough reading and helpful remarks which helped me improve this paper.

\medskip
\medskip


\begin{thebibliography}{99}

\bibitem{BF}
\newblock B. Andreianov, F. Bouhsiss,
\newblock \emph{Uniqueness for an elliptic-parabolic problem with Neumann boundary condition.}
\newblock J. Evol. Equ.  \textbf{4} (2004)  273-295.

\bibitem{GB}
\newblock  B. Andreianov, M. Karimou Gazibo,
\newblock \emph{Entropy formulation of degenerate parabolic equation with  zero-flux boundary condition.}
\newblock Z. Angew. Math. Phys., \textbf{64} (2013) no 5, pp 1471-1491.



\bibitem{BSHIBI}
\newblock B. Andreianov, K. Shibi,
\newblock \emph{Scalar conservation laws with nonlinear boundary conditions}
\newblock C. R. Acad. Paris. \textbf{345} (8) (2007) 431-434.
\bibitem{BS}
\newblock B. Andreianov, K. Shibi,
\newblock \emph{Well-posedness of general boundary-value problems for scalar conservation laws.}
\newblock Trans. AMS, accepted. Available as preprint HAL http://hal.archives-ouvertes.fr/ : hal-00708973, version 2.


\bibitem{BNL}
\newblock C. Bardos, A.Y. Le Roux, J.C. Nedelec,
\newblock \emph{First order quasilinear equations with boundary conditions.}
\newblock Comm. PDE. \textbf{4} (1979) 1017-1034.

\bibitem{BGP}
\newblock Ph. B\'enilan,  Crandall, M. G. and Pazy, A.,
\newblock \emph{Nonlinear evolution equations in Banach spaces}.
\newblock Preprint book.

\bibitem{BFK1}
\newblock R. B\"{u}rger, H. Frid, K. H. Karlsen,
\newblock \emph{On the well-posedness of entropy solution to conservation laws with a zero-flux boundary condition.}
\newblock J. Math. Anal. Appl. \textbf{326} (2007), 108-120.

\bibitem{CAR}
\newblock J. Carrillo,
\newblock \emph{Entropy solutions for nonlinear degenerate problems.}
\newblock Arch. Ration. Mech. Anal. \textbf{147} (4) (1999) 269-361.


\bibitem{KRU}
\newblock S.N. Kruzkhov,
\newblock \emph{First order quasi-linear equations in several independent variables.}
\newblock Math. USSR Sb. \textbf{10} (2) (1970) 217-243.

\bibitem{PORETTA}
\newblock C. Mascia, A. Porretta, A. Terracina,
\newblock \emph{Nonhomogeneous Dirichlet problems for degenerate parabolic-hyperbolic equation.}
\newblock Arch. Ration. Mech. Anal.  \textbf{163} (2) (2002) 87-124.

\bibitem{VOV}
\newblock J Vovelle,
\newblock \emph{Convergence of finite volume monotones schemes for scalar conservation laws on bounded domains.}
\newblock Numer. Math., \textbf{90}, (3),  (2002) 563-596.

\bibitem{PAN}
\newblock E.Yu. Panov,
\newblock \emph{On the strong pre-compactness property for entropy solutions of a degenerate elliptic equation with discontinuous flux.}
\newblock  J. Differential Equations. \textbf{247} (2009) 2821-2870.
\end{thebibliography}
\end{document}